%%%%%%%%%%%%%%%%%%%%%%%%%%%%%%%%%%%%%%%%%%%%%%%%
%
%
%
%%%%%%%%%%%%%%%%%%%%%%%%%%%%%%%%%%%%%%%%%%%%%%%%
\documentclass[11pt]{amsart}
\usepackage{amssymb,mathrsfs,graphicx,enumerate}

\topmargin-0.1in \textwidth6.in \textheight8.5in \oddsidemargin0in
\evensidemargin0in
\title[  $L^2$-contraction for viscous shocks ]{$L^2$-contraction for shock waves of scalar viscous conservation laws}

\author[Kang]{Moon-Jin Kang}
\address[Moon-Jin Kang]{\newline Department of Mathematics, \newline The University of Texas at Austin, Austin, TX 78712, USA}
\email{moonjinkang@math.utexas.edu}

\author[Vasseur]{Alexis F. Vasseur}
\address[Alexis F. Vasseur]{\newline Department of Mathematics, \newline The University of Texas at Austin, Austin, TX 78712, USA}
\email{vasseur@math.utexas.edu}

\newtheorem{theorem}{Theorem}[section]
\newtheorem{lemma}{Lemma}[section]

\newtheorem{remark}{Remark}[section]

\newcommand{\bbr}{\mathbb R}

\newcommand{\R}{\mathbb R}

\numberwithin{figure}{section}
%

%\documentclass[12pt]{amsart}
%\usepackage{amsmath, amsfonts, amscd, epsfig, amssymb}
%local existence

\newcommand{\beq}{\begin{equation}}
\newcommand{\eeq}{\end{equation}}
\newcommand{\bsp}{\begin{split}}
\newcommand{\esp}{\end{split}}

%collect

%\newcommand{\hw}{{\hat{w}}}

%\newcommand{\intt}{\int_0^t}

%temp
%\newcommand{\k}{p}

%\newcommand{\uU}{U_{0,+}}
%\newcommand{\inttr}{{\int_0^t\int_{-\infty}^0}}
%\newcommand{\inttr}{{\int_0^t\int_{\RR_-}}}

%\newcommand{\intr}{{\int_{-\infty}^0}}

%\newcommand{\dgam}{{\Delta \gamma}}
%special

%B's

%\newcommand{\tpsi}{\tilde{\psi}}

%K's new macros..

\newcommand{\sgn}{{\text{\rm sgn}}}

%\newcommand{\ft}{{\cal{F}\cal{T}}}

%\newcommand{\R}{\text{\rm Re }}
%\newcommand{\R}{\Re}

%\newtheorem{ex}{Exercise}[theo]

% mes macros a moi que j'ai

\newcommand{\RR}{{\mathbb R}}

%\newcommand{\11}{{\mathbb 1}}

%%%%%% Lettre rm

%%%%%%%%%%%%%%%%%
% Lettre check

%\def\Range{\text{\rm Range}}

\def\eps{\varepsilon }

\newcommand\adots{\mathinner{\mkern2mu\raise1pt\hbox{.}
\mkern3mu\raise4pt\hbox{.}\mkern1mu\raise7pt\hbox{.}}}

%K's thms

%
%\numberwithin{equation}{section}

\def\charf {\mbox{{\text 1}\kern-.30em {\text l}}}
    %{I_0} % initial kinetic density
 % initial kinetic density x kinetic 2nd moment
 % initial data for f

 %the flocking interaction
 % collision

  %\mathcal \vee}

  % the "Boltzmann" scaling
 % the particle scaling per N particles

 % upper bound of r()

 % lower bound of r()

 % v-support
 %x-support

%%%%%%%%%%%%%%%%
\begin{document}
%%%%%%%%%%%%%%%%

\date{\today}

\subjclass[2010]{35L65, 35L67, 35B35, 35B40} \keywords{viscous conservation laws, shock wave, stability, contraction, relative entropy}

\thanks{\textbf{Acknowledgment.} M.-J. Kang was supported by Basic Science Research Program through the National Research Foundation of Korea funded by the Ministry of Education, Science and Technology (NRF-2013R1A6A3A03020506). A. F. Vasseur was partially supported by the NSF Grant DMS 1209420. The authors thank Prof. Denis Serre for valuable comments and suggesting using the entropy method for  the viscous conservation laws.
}

\begin{abstract} 
We consider the $L^2$-contraction up to a shift for viscous shocks of  scalar viscous conservation laws with strictly convex fluxes in one space dimension. In the case of a flux which is a small perturbation of the quadratic Burgers flux,  we show that any viscous shock induces  a contraction in $L^2$, up to a shift. That is,  the $L^2$ norm of the difference of any solution of the viscous conservation law, with an appropriate shift of the  shock wave, does not increase in time.   If, in addition, the difference between the initial value of the solution and the shock wave is also bounded in $L^1$, the $L^2$ norm of the  difference  converges  at the optimal rate $t^{-1/4}$. Both results do not involve any smallness condition on the initial value, nor on the size of the shock. In this context of small perturbations of the quadratic Burgers flux, the result improves  the Choi and Vasseur's result in \cite{C-V}. However, we show that the $L^2$-contraction up to a shift does not hold for every convex flux. We construct a smooth strictly  convex flux, for which the $L^2$-contraction does not hold any more even along any Lipschitz shift. 
\end{abstract}
\maketitle \centerline{\date}

%\tableofcontents

\section{Introduction and main results}
\setcounter{equation}{0}
This paper is devoted to the study of $L^2$-contraction properties, up to a shift,
 for viscous shock waves of  scalar viscous conservation laws with smooth strictly convex fluxes $A$ in one space dimension: 
\begin{align}
\begin{aligned} \label{main}
&\partial_t U + \partial_x A(U) = \partial^2_{xx} U, \quad t>0,~x\in \bbr,\\
&U(0,x) = U_0(x).
\end{aligned}
\end{align}
%We show that, if the flux $A$  is a small perturbation of a  quadratic function, then any viscous shock wave of \eqref{main} induces a   $L^2$-contraction up to a shift. We also show that 
%We will show that  viscous shocks of \eqref{main} do not induce  $L^2$-contraction up to a shift  for any strictly convex flux. 
%  it does  for any  flux $A$ which is a small perturbation of the  quadratic function, as defined  in (\eqref{quad}) such as the flux $A(U)=\frac{U^2}{2}$ of the Burgers equation. But we establish a strictly convex flux for which the $L^2$-contraction property fails.\\  
For any smooth strictly convex flux $A$, and any  $u_{-}, u_{+}\in \RR$  with $u_{-}>u_{+}$, there exists a smooth   function   $S_1$ defined on $\RR$, and $\sigma\in \RR$, such that  $S_1(x-\sigma t)$ is a traveling wave solution of  Equation \eqref{main},   connecting  $u_-$ at $-\infty$ to $u_+$ at $+\infty$. The function $S_1$ satisfies  
\begin{align}
\begin{aligned}\label{layer-0} 
& -\sigma S_1^{\prime}(\xi) + A(S_1)^{\prime}(\xi) = S_1^{\prime\prime}(\xi),\\
& \lim_{\xi\to \pm\infty} S_1 = u_{\pm},\quad \lim_{\xi\to \pm\infty} S_1^{\prime} = 0,
\end{aligned}
\end{align} 
where $\sigma$ is the speed of the shock determined by the Rankine-Hugoniot condition:
\beq\label{RH-con}
\sigma=\frac{A(u_{+}) -  A(u_{-})}{u_{+}-u_{-}}.
\eeq
Integrating \eqref{layer-0}, we find
\begin{align}
\begin{aligned}\label{layer} 
 -\sigma(S_1 - u_{\pm})+A(S_1) -  A(u_{\pm}) = S_1^{\prime}, \quad \lim_{\xi\to \pm\infty} S_1 = u_{\pm}.
\end{aligned}
\end{align}

There have been extensive studies on the stability of shock profiles of viscous conservation laws. When initial data $U_0$ is a small perturbation from the viscous shock $S_1$, the stability estimates have been shown in various way, such as the maximum principle, Evans function theory and the weighted norm approach based on the semigroup framework. This kind of results in the scalar case have been obtained by Goodman \cite{G-2}, Hopf \cite{H}, Howard \cite{Howard}, Nishihara \cite{N}, but also in the system case by Liu \cite{L-1, L-2} and Zumbrun \cite{L-Z-2}, Szepessy and Xin \cite{S-X} (see also \cite{G-1}). On the other hand, Freist$\ddot{\mbox{u}}$hler and Serre \cite {F-S} have shown the $L^1$-stability of viscous shock waves,  without smallness condition, by combining energy estimates, a lap-number argument and a specific geometric observation on attractor of steady states. Moreover, their stability result still holds for any $L^p$ space, $1\le p\le \infty$. This result was improved by Kenig and Merle \cite{K-M}, with the uniform convergence to the viscous shock with respect to initial datas. The contraction property of viscous scalar conservation laws with respect to  Wasserstein distances, was studied by  Bolley, Brenier, and  Loeper  in \cite{B-B-L},  and Carrillo, Francesco and Lattanzio in \cite{C-F-L}.\\
   
 In this article, we use the relative entropy method to study contraction properties in $L^2$ for viscous shocks to scalar viscous conservation laws. This work follows a program initiated in   \cite{L,L-V,S-V,V, V-1} concerning the relative entropy method for the study on the stability of inviscid shocks for the scalar or system of conservation laws verifying a certain entropy condition. The relative entropy method has been used as an important tool in the study of asymptotic limits to conservation laws as well. For incompressible limits, see Bardos, Golse, Levermore  \cite{B-G-L-1,B-G-L-2}, Lions and Masmoudi \cite{L-M}, Saint Raymond \cite{S}. For the compressible limit, see Tzavaras \cite{T} in the context of relaxation and \cite{B-T-V,B-V,K-V,M-V,Y} in the context of hydrodynamical limits.\\

Our first result is on the $L^2$-contraction up to a shift for viscous shocks of \eqref{main} with a strictly convex flux $A$ which has a perturbed form of quadratic function as
\beq\label{quad}
A(x)=ax^2 + g(x), \quad  a>0,
\eeq
where $g$ is a $C^2$-function satisfying $\|g^{\prime\prime}\|_{L^{\infty}(\bbr)} <\frac{2}{11}a$. 
The following result shows  also a rate of  convergence toward the shock waves  as  $t^{-1/4}$, as long as the initial perturbation $U_0-S_1$ is also  bounded in $L^1$. Notice that the decay rate $t^{-1/4}$  is the same rate as the heat equation. Moreover, our result does not need any  assumption on the spatial decay of the  initial data, in contrast with   previous works (see for example \cite{H, N}).
\begin{theorem}\label{main thm} 
Assume the flux $A$ as in \eqref{quad}. For any given $u_{-}>u_{+}$, let $S_1$ be the associated viscous layer of \eqref{main} with endpoints $u_{-}$ and $u_{+}$. Then, for any solution $U$ to \eqref{main} with  initial data $U_0$ satisfying $U_0-S_1\in L^{2}(\bbr)$, the following $L^2$-contraction holds:
\begin{align}
\begin{aligned}\label{contract}
\|U(t,\cdot+X(t))-S_1\|_{L^{2}(\bbr)} \le \|U_0-S_1\|_{L^{2}(\bbr)},\quad t>0,
\end{aligned}
\end{align}
for the shift $X(t)$ satisfying
\begin{align}
\begin{aligned}\label{curve}
& \dot{X}(t) = \sigma -\frac{2a+\|g^{\prime\prime}\|_{L^{\infty}(\bbr)}}{2(u_{-}- u_{+})}\int_{-\infty}^{\infty} (U(t,x+X(t))- S_1(x)) S_1^{\prime}(x) dx,\\
& X(0)=0.
\end{aligned}
\end{align}
Furthermore, if $U_0-S_1\in L^{1}\cap L^{2}(\bbr)$, we have the following estimate for all $t>0$,
\beq\label{convergence}
\|U(t,\cdot+X(t))-S_1\|_{L^{2}(\bbr)} \le \frac{ C_0\|U_0 - S_1\|_{L^2(\bbr)}}{ C_0+t^{1/4}\|U_0 - S_1\|_{L^2(\bbr)} },
\eeq
where $C_0:= C(1+  \|U_0-S_1\|_{L^{1}(\bbr)}+ \|U_0-S_1\|_{L^{2}(\bbr)}^2)$ and $C$ is a positive constant only depending on the end points $u_-$, $u_+$ and the flux $A$. 
\end{theorem}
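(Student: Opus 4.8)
The plan is to use the relative entropy method with the shift $X(t)$ defined by the ODE \eqref{curve}, following the strategy introduced in the program of \cite{V,V-1,S-V}. The natural quantity to track is $E(t):=\frac{1}{2}\int_{\bbr}(U(t,x+X(t))-S_1(x))^2\,dx$, which is finite by the hypothesis $U_0-S_1\in L^2$. Writing $v(t,x):=U(t,x+X(t))-S_1(x)$, I would first record the evolution equation satisfied by $v$: substituting into \eqref{main} and using \eqref{layer-0}, one gets $\partial_t v = \dot X \,\partial_x v + \dot X\, S_1' - \partial_x\big(A(S_1+v)-A(S_1)\big)+ \sigma\,\partial_x v + \partial_{xx}^2 v$ (after grouping the traveling-wave terms via \eqref{layer}). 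Then I differentiate $E(t)$ in time, integrate by parts, and organize the result into three groups: a ``good'' dissipation term $-\int (\partial_x v)^2\,dx$ coming from the viscosity; the flux contribution, which after the Taylor expansion $A(S_1+v)-A(S_1)=A'(S_1)v+\tfrac12 A''(S_1)v^2 + \text{(cubic with }g\text{)}$ splits into a part controlled by the dissipation and a part proportional to $\int v^2 S_1'\,dx$ with coefficient essentially $-(a \text{ up to }g'')$; and the shift contribution $\dot X\big(\int v\,\partial_x v\,dx + \int v\,S_1'\,dx\big)$. The point of choosing $\dot X$ as in \eqref{curve} — proportional to $-\int v\,S_1'\,dx$ divided by $(u_--u_+)$ — is precisely to make the leading cross terms cancel, so that $\frac{d}{dt}E(t)\le 0$; this is where the smallness condition $\|g''\|_{L^\infty}<\frac{2}{11}a$ enters, to guarantee that the remaining lower-order terms (the cubic flux remainder controlled by Gagliardo–Nirenberg / Sobolev against the dissipation, and the sign of the quadratic-in-$v$ weighted term) are dominated. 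That establishes \eqref{contract}.

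For the decay rate \eqref{convergence}, I would upgrade the previous computation to a quantitative differential inequality. The contraction estimate actually yields, after not discarding the good terms, a bound of the form $\frac{d}{dt}E(t) \le -c\int_{\bbr}(\partial_x v)^2\,dx - c\int_{\bbr} v^2 S_1'\,dx + (\text{controllable remainder})$ for some $c>0$. Assuming $U_0-S_1\in L^1$, one shows the $L^1$ norm $\|v(t,\cdot)\|_{L^1}$ stays bounded uniformly in time (this uses the $L^1$-contraction structure of scalar viscous conservation laws together with control of $\dot X$, which is bounded since $|\int v S_1'\,dx|\le \|v\|_{L^2}\|S_1'\|_{L^2}$). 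Interpolating, $E(t)=\tfrac12\|v\|_{L^2}^2 \le C\|v\|_{L^1}^{4/3}\|\partial_x v\|_{L^2}^{2/3}$ by Gagliardo–Nirenberg in one dimension, so $\|\partial_x v\|_{L^2}^2 \ge c\,E(t)^3 / \|v\|_{L^1}^4 \ge c\, E(t)^3 / C_0^{2}$ (absorbing the $L^1$ bound into the constant $C_0$). Feeding this into the differential inequality gives $\frac{d}{dt}E \le -\kappa E^3/C_0^2$, and integrating this Bernoulli-type ODE from $0$ to $t$ produces $E(t)\le C_0^2/(C_0^2/E(0) + \kappa t)^{?}$ — tracking the exponents carefully, $E(t)^{-2}\ge E(0)^{-2}+ c t/C_0^2$, which rearranges exactly to the stated form $\|v(t)\|_{L^2}\le C_0\|U_0-S_1\|_{L^2}/(C_0+t^{1/4}\|U_0-S_1\|_{L^2})$ after taking square roots and matching constants.

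The main obstacle I anticipate is the second step of the contraction argument: closing the estimate $\frac{d}{dt}E\le 0$ with the \emph{explicit} threshold $\frac{2}{11}a$ and with \emph{no} smallness assumption on $\|U_0-S_1\|$ or on the shock strength $|u_--u_+|$. Because no perturbation is small, one cannot treat the $v^2$ and $v^3$ terms perturbatively in the usual way; instead the cancellation engineered by the shift must be exploited together with sharp functional inequalities relating $\int v^2 S_1'\,dx$, $\int v^3 S_1'\,dx$ (or $\int v^3 (\text{something})'$), and the dissipation $\int (\partial_x v)^2\,dx$, presumably using the monotonicity and explicit ODE \eqref{layer} for $S_1$ to compare the weight $S_1'$ with derivatives of $v$. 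Getting the constants to line up so that exactly $\|g''\|_{L^\infty}<\frac{2}{11}a$ suffices — rather than some smaller, non-explicit constant — is the delicate part, and I expect it to require a carefully optimized Young/Cauchy–Schwarz splitting of each cross term, very much in the spirit of the weighted relative-entropy computations in \cite{C-V} but pushed to allow the Burgers flux plus a quantified perturbation.
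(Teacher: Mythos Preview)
Your overall architecture is right, but the decisive technical idea is missing, and one of your anticipated difficulties is a red herring.

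\textbf{The missing step in the contraction argument.} After computing $\frac{d}{dt}\int (V-S_1)^2\,dx$ you correctly isolate three pieces: the shift term $2(\dot X-\sigma)\int (V-S_1)S_1'\,dx$, the relative-flux term $-2\int A(V|S_1)S_1'\,dx$, and the diffusion $-2\int|\partial_x(V-S_1)|^2\,dx$. What you do not see is that the whole expression becomes tractable only after the change of variable $y=S_1(x)$, which is admissible since $S_1'<0$. Setting $w(t,S_1(x))=V(t,x)-S_1(x)$ and using \eqref{layer} to rewrite $S_1'$, the dissipation becomes an expression on the \emph{bounded} interval $[u_+,u_-]$:
\[
D(t)=2(\dot X-\sigma)\int_{u_+}^{u_-}\! w\,dy-2\int_{u_+}^{u_-}\! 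A(w+y|y)\,dy-2\int_{u_+}^{u_-}\!\big(A(y)-A(u_-)-\sigma(y-u_-)\big)|\partial_y w|^2\,dy.
\]
For $A(x)=ax^2+g(x)$ the weight in the last integral is, up to a $g$-correction, exactly $-a(u_--y)(y-u_+)$. The choice of $\dot X$ in \eqref{curve} makes the first term equal to $(2a+\|g''\|_{L^\infty})\,\alpha\,\bar w^2$ (with $\alpha=u_--u_+$ and $\bar w$ the mean of $w$), which combines with the second term, bounded below by $-(2a+\|g''\|_{L^\infty})\int w^2\,dy$, to give $-(2a+\|g''\|_{L^\infty})\int(w-\bar w)^2\,dy$. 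The contraction then reduces to a \emph{weighted Poincar\'e inequality on $[u_+,u_-]$}:
\[
\int_{u_+}^{u_-}(w-\bar w)^2\,dy\le \tfrac{5}{6}\int_{u_+}^{u_-}(u_--y)(y-u_+)|\partial_y w|^2\,dy,
\]
proved by hand from the fundamental theorem of calculus and Fubini. Comparing coefficients, $D(t)\ge 0$ holds precisely when $(2a+\|g''\|_{L^\infty})\cdot\frac{5}{6}\cdot\frac{6}{5}\le \ldots$, and the arithmetic gives the threshold $\|g''\|_{L^\infty}<\frac{2}{11}a$. Without the change of variable and this Poincar\'e lemma, you will not recover the explicit constant, nor close the estimate for large data.

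\textbf{There are no cubic terms.} Your concern about $\int v^3 S_1'\,dx$ is misplaced: the relative flux satisfies $A(w+y|y)=aw^2+g(w+y)-g(y)-g'(y)w$, and the integral form of Taylor's remainder gives $|g(w+y)-g(y)-g'(y)w|\le\frac{1}{2}\|g''\|_{L^\infty}w^2$ \emph{exactly}, with no cubic error and no smallness needed. So the flux contribution is purely quadratic in $w$, and nothing has to be absorbed into the dissipation via Sobolev.

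\textbf{The $L^1$ bound in the decay argument.} Your Gagliardo--Nirenberg/Bernoulli scheme is the paper's, but the step ``$\|v(t)\|_{L^1}$ stays bounded because $\dot X$ is bounded'' does not work: a bound on $\dot X$ only gives $|X(t)-\sigma t|\le Ct$, hence $\|S_1(\cdot-\sigma t+X(t))-S_1\|_{L^1}$ can grow linearly. The paper splits $V-S_1=w_1+w_2$ with $w_1=V-S_1(\cdot-\sigma t+X(t))$ controlled by the standard $L^1$-contraction, and $w_2=S_1(\cdot-\sigma t+X(t))-S_1$ whose $L^1$ norm equals $(u_--u_+)\,|X(t)-\sigma t|$. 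The uniform-in-time bound on $|X(t)-\sigma t|$ then comes not from $\dot X$ but from the already-established $L^2$ contraction: one shows $\|w_2\|_{L^2}^2=F(\sigma t-X(t))$ with $F$ even and $F(\tau)\ge\beta(|\tau|-1)$, and since $\|w_2\|_{L^2}^2\le\|V-S_1\|_{L^2}^2+2\|w_1\|_{L^1}\|w_2\|_{L^\infty}$ is bounded by initial data, so is $|X(t)-\sigma t|$.
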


\begin{remark}
The existence and uniqueness of the curve $X$ are guaranteed by the Cauchy-Lipschitz theorem. Moreover, $X$ is Lipschitz. 
Indeed, since $A^{\prime\prime}>0$, it follows from \eqref{layer} that $S_1$ satisfies $u_+<S_1<u_-$ and
\begin{align}
\begin{aligned}\label{s-1-1} 
S_1^{\prime} &= - \sigma (S_1 -u_{\pm}) + A(S_1) -A(u_{\pm})\\
& = (u_{\pm} - S_1) \Big(  \frac{A(u_{-}) -A(u_{+})}{u_{-}-u_{+}} -  \frac{A(u_{\pm}) -A(S_1)}{u_{\pm}-S_1} \Big) <0.
\end{aligned}
\end{align}
In particular, since
\[
\frac{A(u_{\pm}) -A(S_1)}{u_{\pm}-S_1}\rightarrow A^{\prime} (u_{\pm}) \quad \mbox{as} ~S_1\rightarrow u_{\pm},
\]
for some positive constants $c_{\pm}$, we have
\begin{align*}
\begin{aligned}
(u_{\pm} - S_1) \Big(  \frac{A(u_{-}) -A(u_{+})}{u_{-}-u_{+}} -  \frac{A(u_{\pm}) -A(S_1)}{u_{\pm}-S_1} \Big) \sim \pm c_{\pm}(u_{\pm} - S_1) \quad \mbox{as} ~S_1\rightarrow u_{\pm},
\end{aligned}
\end{align*}
which implies
\begin{align*}
\begin{aligned}
|S_1(\xi) -u_{\pm}| \sim  \exp(-c_{\pm} |\xi|)\quad\mbox{as} ~\xi\rightarrow \pm \infty.
\end{aligned}
\end{align*}
This yields $S_1^{\prime}\in L^{2}(\bbr)$, therefore, using \eqref{contract}, we have
\begin{align*}
\begin{aligned}
|\dot{X}(t)-\sigma| &\le  \frac{2a+\eps}{2 (u_{-}- u_{+})}  \|U(t,\cdot+X(t))-S_1\|_{L^{2}(\bbr)} \|S_1^{\prime}\|_{L^{2}(\bbr)}\\
&\le C \|U(t,\cdot+X(t))-S_1\|_{L^{2}(\bbr)}\\
& \le C  \|U_0-S_1\|_{L^{2}(\bbr)}.
\end{aligned}
\end{align*}
\end{remark}
\vspace{0.5cm}

As a second result, we  construct a strictly convex flux $A$, for which a viscous shock of \eqref{main} does not induce a $L^2$-contraction up to a shift. This is stated in the following theorem.
\begin{theorem}\label{thm-example} 
For any given $u_{-}>u_{+}$, there is a smooth strictly convex flux $A$ and smooth initial data $U_0$ with $U_0-S_1\in L^{2}(\bbr)$ such that for any Lipschitz shift $X$, there exists $T^*>0$, such that the solution $U$ to \eqref{main} with $A$ and $U_0$ satisfies
\begin{align*}
\begin{aligned}
\|U(t,\cdot+X(t))-S_1\|_{L^{2}(\bbr)} > \|U_0-S_1\|_{L^{2}(\bbr)},\quad 0\le t <T^*.
\end{aligned}
\end{align*}
\end{theorem}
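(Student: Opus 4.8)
After normalizing $X(0)=0$, put $v(t,x):=U(t,x+X(t))-S_1(x)$; the aim is to show that, for \emph{every} Lipschitz shift $X$, the map $t\mapsto\|v(t)\|_{L^2(\bbr)}$ is strictly increasing just after $t=0$. Since $S_1$ solves \eqref{layer-0}, a direct computation gives
\begin{equation*}
\partial_t v=-\partial_x\big(A(S_1+v)-A(S_1)\big)+\big(\dot X(t)-\sigma\big)S_1'+\partial_x^2 v+\dot X(t)\,\partial_x v .
\end{equation*}
Multiplying by $v$ and integrating by parts in $x$ — legitimate because $U$ is smooth and $v$, $S_1$, $S_1'$ decay exponentially by \eqref{s-1-1} — and handling the flux term through the exact primitive identity $\int_{\bbr}\partial_x v\,(A(S_1+v)-A(S_1))\,dx=\int_{\bbr}|S_1'|\,P\,dx$, one obtains
\begin{align*}
\frac12\frac{d}{dt}\|v(t)\|_{L^2(\bbr)}^2&=\int_{\bbr}|S_1'|\,P(t,\cdot)\,dx-\int_{\bbr}|\partial_x v(t,\cdot)|^2\,dx\\
&\quad+\big(\dot X(t)-\sigma\big)\int_{\bbr}v(t,x)\,S_1'(x)\,dx ,
\end{align*}
where $P(t,x):=\int_0^{v(t,x)}\big(A'(S_1(x)+w)-A'(S_1(x))\big)\,dw$ is nonnegative because $A$ is convex (the integrand and $v$ share the same sign). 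Denote by $D(t)$ the sum of the first two terms on the right.

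The uncontrolled quantity $\dot X$ appears only in the last term, weighted by $\int_{\bbr}v\,S_1'\,dx$, so the natural move is to impose on the initial perturbation $v_0:=U_0-S_1$ the single scalar condition $\int_{\bbr}v_0\,S_1'\,dx=0$. Then $g(t):=\int_{\bbr}v(t)\,S_1'\,dx$ satisfies $g(0)=0$ and is Lipschitz near $t=0$ (its derivative is bounded in terms of $\Lip X$, the exponential decay of $S_1,S_1',S_1'',S_1'''$, and $\sup_{s\le t}\|v(s)\|_{L^\infty(\bbr)}$, all finite for small $t$); hence, in $\tfrac12\big(\|v(t)\|_{L^2}^2-\|v_0\|_{L^2}^2\big)=\int_0^t D(s)\,ds+\int_0^t(\dot X-\sigma)\,g\,ds$, the second integral is $O(t^2)$, while continuity of $D$ near $0$ (which follows from $v(t)\to v_0$ in $H^1$ as $t\to0^+$) gives $\int_0^t D\ge\tfrac12D(0)\,t$ for $t$ small. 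Consequently the entire problem reduces to producing a smooth strictly convex flux $A$ and an admissible $v_0$ with $D(0)>0$: this already forces $\|v(t)\|_{L^2(\bbr)}>\|v_0\|_{L^2(\bbr)}$ on an interval $(0,T^*)$ with $T^*$ depending on $X$, which is what the statement asks.

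The step I expect to be the real obstacle is that $A$ and its profile $S_1$ are coupled. Here $D(0)=\int_{\bbr}|S_1'|\,P(0,\cdot)\,dx-\int_{\bbr}|\partial_x v_0|^2\,dx$, and enlarging $A''$ to inflate $P$ seems to sharpen $S_1$, after which a scaling count keeps all contributions of the same size — consistent with Theorem~\ref{main thm}, which precludes exactly this for near-quadratic fluxes. The device to break the coupling is that, by \eqref{layer}, $S_1$ solves $S_1'=A(S_1)-A(u_-)-\sigma(S_1-u_-)$ with $u_+<S_1<u_-$ and $\sigma$ given by \eqref{RH-con}; thus $S_1$ and $\sigma$ see only $A|_{[u_+,u_-]}$, and one is free to modify $A$ on $\bbr\setminus[u_+,u_-]$ without altering the profile.

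With this freedom, I would take $A(u)=\tfrac12u^2$ on $(-\infty,u_-]$, so $S_1$ is the explicit Burgers shock; choose $\varphi\in C_c^\infty(\bbr)$ with $\int_{\bbr}\varphi\,S_1'\,dx=0$ that is $\ge\varepsilon_0>0$ on a nonempty interval $I$; and set $v_0:=t_0\varphi$ with $t_0$ a large constant. For $t_0$ large one has $U_0=S_1+v_0\ge u_-+2$ on $I$, so in $P(0,x)$ the argument $S_1(x)+w$ sweeps across $[u_-+1,u_-+2]$ over a $w$-interval of length at least $t_0\varepsilon_0-(u_-+2-u_+)$; prescribing $A''\ge N$ on $[u_-+1,u_-+2]$ — after extending $A''$ to a smooth strictly positive function equal to $1$ on $(-\infty,u_-]$ and choosing the two integration constants so that $A\equiv\tfrac12u^2$ there — then forces $P(0,x)\ge\tfrac12Nt_0\varepsilon_0$ on $I$, hence $\int_{\bbr}|S_1'|\,P(0,\cdot)\,dx\ge\tfrac12Nt_0\varepsilon_0\int_I|S_1'|\,dx>0$, whereas $\int_{\bbr}|\partial_x v_0|^2\,dx=t_0^2\int_{\bbr}|\varphi'|^2\,dx$ is a fixed finite number. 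Fixing $t_0$ large enough for the geometric requirements and then taking $N$ large enough makes $D(0)>0$, which completes the construction. The remaining points — strict convexity and smoothness of the assembled $A$, global existence of a smooth bounded solution $U$ for the smooth datum $U_0$, and the exponential decay of $S_1$ used throughout — are standard and already in force for Theorem~\ref{main thm}.
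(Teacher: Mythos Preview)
Your proof is correct and takes a genuinely different route from the paper's. Both arguments reach the same $L^2$ energy identity (the paper's \eqref{main-energy-1}), impose the orthogonality $\int v_0 S_1'\,dx=0$ to annihilate the $\dot X$-dependent term at $t=0$, and then construct $A$ and $v_0$ so that the remaining quantity $D(0)$ has the sign that forces growth. The constructions themselves, however, differ substantially. The paper works after the change of variable $y=S_1(x)$ and designs a flux on $[u_+,u_-]$ that is a mollified piecewise-linear function with $A''$ concentrated near the endpoints, paired with an explicit odd perturbation $\phi$; a direct computation then shows $\int A''|\phi|^2+2\int A|\phi'|^2>0$ for a suitable parameter. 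In particular the paper's perturbation is $\varepsilon$-small and the flux is altered only on the range of $S_1$. You instead exploit that $S_1$ depends only on $A|_{[u_+,u_-]}$: keeping $A=\tfrac12u^2$ on $(-\infty,u_-]$ freezes $S_1$ as the Burgers profile, and you then make $A''$ enormous on $[u_-+1,u_-+2]$ and push $U_0$ into that region via a large perturbation $t_0\varphi$. This decouples the two competing terms in $D(0)$ and reduces the construction to a scaling argument in the free parameter $N$, which is conceptually cleaner; the price is that your counterexample uses large data and modifies $A$ outside the physical range of the shock, whereas the paper's is small-data and intrinsic to $[u_+,u_-]$. Your explicit normalization $X(0)=0$ is the natural reading of the statement, and the paper's proof makes the same tacit assumption.
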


\vspace{0.5cm}
As an  application of Theorem \ref{main thm}, the contraction \eqref{contract} and decay estimate \eqref{convergence} can be applied to the study on the inviscid limit to the shock waves. In \cite{C-V}, Choi and Vasseur considered the following equation
\begin{align}
\begin{aligned} \label{inviscid main}
&\partial_t U^{\eps} + \partial_x A(U^{\eps}) = \eps\partial^2_{xx} U^{\eps}, \quad t>0,~x\in \bbr,\\
&U^{\eps}(0,x) = U_0(x).
\end{aligned}
\end{align}
They showed that the rate of convergence  in $L^2$ up to a shift,  to an inviscid shock, is of order 
$\sqrt{\eps}\log{(1/\eps)}$.
%\begin{proposition}\label{prop-1}
%For given $u_{-}>u_{+}$, let $S_0$ be an associated inviscid shock with endpoints $u_{-}$ and $u_{+}$, i.e.,
%\beq\label{shock-0}
%S_0(x)=\left\{ \begin{array}{ll}
 %         u_{-}& \mbox{if $ x < 0$},\\
  %       u_{+}& \mbox{if $x\ge 0$},\end{array} \right.
%\eeq
%Suppose that the initial data $U_0\in L^{\infty}(\bbr)\cap BV_{loc}(\bbr)$ satisfies
%\[
%U_0 - S_0 \in L^{2}(\bbr)  \quad \mbox{and}\quad \Big( \frac{d}{dx} U_0\Big)_{+} \in L^{2}(\bbr).
%\]
%Then for any $T>0$, there exists $\eps_0>0$ and $C>0$ such that the solution $U^{\eps}$ to \eqref{inviscid main} with $0<\eps<\eps_0$ verifies
%\begin{align*}
%\begin{aligned}
%\int_{\bbr} \Big|U^{\eps}(t,x) - S_0(x-X(t))\Big|^2 dx \le \int_{\bbr} \Big|U_0(x) - S_0(x)\Big|^2 dx + C\eps\log{(1/\eps)} ,\quad 0<t<T,
%\end{aligned}
%\end{align*}
%where the shift $X$ is a certain Lipschitz curve with $X(0)=0$.
%\end{proposition}
Let us denote
\beq\label{shock-0}
S_0(x)=\left\{ \begin{array}{ll}
          u_{-}& \mbox{if $ x < 0$},\\
        u_{+}& \mbox{if $x\ge 0$},\end{array} \right.
\eeq

Theorem \ref{main thm} improves the rate of convergence, and simplifies the assumptions  in their result. Indeed, as a third result,  we  show the following theorem.
% above under the certain consideration. More precisely, if we consider the flux $A$ of \eqref{quad} and small initial perturbation as
%\beq\label{small-1}
%\int_{\bbr} \Big|U_0(x) - S_0(x)\Big|^2 dx\le C\eps,
%\eeq
%then the contraction \eqref{contract} induces the rate $\eps$ better than $\eps\log{(1/\eps)}$, and the \eqref{convergence} allows us to get the strong decay estimate as follows. 
\begin{theorem}\label{thm-improve} 
Under the same hypothesis of Theorem \ref{main thm},  the solution $U^{\eps}$ to \eqref{inviscid main} verifies
\beq\label{im-1}
\|U^{\eps}(t,\cdot) - S_0(\cdot-Y(t))\|_{L^2} \leq \|U_0 - S_0\|_{L^2} + C\sqrt \eps,\quad t>0,
\eeq
where the shift $Y$ is defined by $Y(t)=\eps X(t/\eps)$ from the shift $X$ defined in  \eqref{curve}.\\
Moreover, if 
\beq\label{small-1}
\int_{\bbr} \Big|U_0(x) - S_0(x)\Big|^2 dx+\int_{\bbr} \Big|U_0(x) - S_0(x)\Big| dx\le C\eps,
\eeq
 then we have
\beq\label{im-2}
\Big\|U^{\eps}(t,\cdot) - S_1\Big(\frac{\cdot-Y(t)}{\eps} \Big)\Big\|_{L^{2}}^2 \le \frac{C\eps^{3/2}}{\eps^{1/2}+ t^{1/2} }, \quad t>0.
\eeq
\end{theorem}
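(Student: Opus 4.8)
The plan is to deduce everything from Theorem \ref{main thm} by the standard parabolic rescaling that turns \eqref{inviscid main} into \eqref{main}. First I would set $V(t,x) := U^{\eps}(\eps t, \eps x)$; a direct computation shows that $V$ solves \eqref{main} with initial data $V_0(x) = U_0(\eps x)$. Since the viscous profile $S_1$ for \eqref{main} rescales to $S_1(\cdot/\eps)$ for \eqref{inviscid main}, and since $\|V_0 - S_1\|_{L^2(\bbr)}^2 = \eps^{-1}\|U_0(\eps\,\cdot) - S_1(\cdot)\|$-type identities hold by change of variables, the hypothesis $U_0 - S_1\in L^2$ (which is contained in the hypotheses of Theorem \ref{main thm}, once one checks $S_0 - S_1\in L^2$ using the exponential decay $|S_1(\xi)-u_\pm|\sim e^{-c_\pm|\xi|}$ recorded in the Remark) guarantees $V_0 - S_1\in L^2(\bbr)$, so Theorem \ref{main thm} applies to $V$. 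Let $X$ be the shift given by \eqref{curve} for $V$, and set $Y(t) := \eps X(t/\eps)$; then $V(t/\eps, (x-Y(t))/\eps) = U^{\eps}(t, x)$, and the contraction \eqref{contract} for $V$ reads, after undoing the rescaling (each $L^2$ norm in $x$ picks up a factor $\sqrt\eps$ that cancels on both sides),
\begin{align*}
\begin{aligned}
\Big\|U^{\eps}(t,\cdot) - S_1\Big(\tfrac{\cdot - Y(t)}{\eps}\Big)\Big\|_{L^2(\bbr)} \le \Big\|U_0 - S_1\Big(\tfrac{\cdot}{\eps}\Big)\Big\|_{L^2(\bbr)}, \quad t>0.
\end{aligned}
\end{align*}

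To obtain \eqref{im-1}, I would insert $S_0$ by the triangle inequality on both sides: on the left, $\|U^{\eps}(t,\cdot) - S_0(\cdot - Y(t))\| \le \|U^{\eps}(t,\cdot) - S_1((\cdot-Y(t))/\eps)\| + \|S_0(\cdot-Y(t)) - S_1((\cdot-Y(t))/\eps)\|$, and the last term equals $\|S_0 - S_1(\cdot/\eps)\|_{L^2(\bbr)}$ by translation invariance; on the right, similarly $\|U_0 - S_1(\cdot/\eps)\| \le \|U_0 - S_0\| + \|S_0 - S_1(\cdot/\eps)\|$. The key quantitative point is the elementary estimate
\[
\|S_0 - S_1(\cdot/\eps)\|_{L^2(\bbr)}^2 = \eps\,\|S_0 - S_1\|_{L^2(\bbr)}^2 = C\eps,
\]
which follows from the change of variables $x \mapsto \eps x$ together with $S_0 - S_1\in L^2(\bbr)$ (exponential tails), with $C$ depending only on $u_\pm$ and $A$. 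Combining these three displays gives \eqref{im-1} with an absolute constant $C$.

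For \eqref{im-2} I would run the same rescaling through the decay estimate \eqref{convergence} rather than \eqref{contract}. The smallness hypothesis \eqref{small-1} on $U_0 - S_0$, combined with the $L^1$ and $L^2$ bounds on $S_0 - S_1(\cdot/\eps)$ (namely $\|S_0 - S_1(\cdot/\eps)\|_{L^1} = \eps\|S_0-S_1\|_{L^1} \le C\eps$ and the $L^2$ bound above), gives via the triangle inequality that $\|U_0 - S_1(\cdot/\eps)\|_{L^1\cap L^2(\bbr)}$ is of order $\sqrt\eps$ in $L^2$ and of order $\eps$ in $L^1$; rescaling to $V$, one gets $\|V_0 - S_1\|_{L^2(\bbr)}^2 = \eps^{-1}\|U_0 - S_1(\cdot/\eps)\|_{L^2(\bbr)}^2 \le C$ and $\|V_0 - S_1\|_{L^1(\bbr)} = \eps^{-1}\|U_0 - S_1(\cdot/\eps)\|_{L^1(\bbr)} \le C$, so the constant $C_0$ in \eqref{convergence} for $V$ is bounded by an absolute constant. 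Then \eqref{convergence} applied to $V$ at time $t/\eps$, squared, reads
\[
\Big\|V\Big(\tfrac{t}{\eps},\cdot\Big) - S_1\Big\|_{L^2(\bbr)}^2 \le \frac{C_0^2\,\|V_0 - S_1\|_{L^2}^2}{\big(C_0 + (t/\eps)^{1/4}\|V_0-S_1\|_{L^2}\big)^2} \le \frac{C}{(1 + (t/\eps)^{1/4})^2} \le \frac{C}{1+(t/\eps)^{1/2}},
\]
and undoing the rescaling (one factor of $\eps$) yields exactly the right-hand side $C\eps^{3/2}/(\eps^{1/2}+t^{1/2})$ of \eqref{im-2}. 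The only genuinely delicate bookkeeping — and the place I expect the main obstacle — is tracking powers of $\eps$ consistently through the change of variables in all the norms simultaneously (the $L^2$ norm scales as $\eps^{1/2}$, the $L^1$ norm as $\eps$, and time as $\eps$), and verifying that the shift $Y(t) = \eps X(t/\eps)$ is exactly the one produced by \eqref{curve} after rescaling, i.e., that $\dot Y(t) = \dot X(t/\eps)$ and that the defining integral for $\dot X$ transforms correctly; this is routine but must be done carefully to land the stated exponents $\eps^{3/2}$ and $\eps^{1/2}$.
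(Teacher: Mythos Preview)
Your proposal is correct and follows essentially the same approach as the paper: rescale $U^{\eps}$ to the unit-viscosity equation, apply the contraction \eqref{contract} for \eqref{im-1} and the decay estimate \eqref{convergence} for \eqref{im-2}, then undo the scaling using $\|S_0 - S_1(\cdot/\eps)\|_{L^p} = \eps^{1/p}\|S_0 - S_1\|_{L^p}$ for the $S_0\leftrightarrow S_1(\cdot/\eps)$ swaps. Your handling of \eqref{im-2} via the monotonicity of $x\mapsto C_0x/(C_0+sx)$ (which only needs the upper bounds $C_0,\|V_0-S_1\|_{L^2}\le C$) is in fact slightly cleaner than the paper's, which arrives at the same bound by separately upper- and lower-bounding $\|U_0 - S_1(\cdot/\eps)\|_{L^2}^2$ and for the latter imposes the auxiliary constraint $C_1<\|S_1-S_0\|_{L^2}^2$.
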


%Note that we do not need the assumptions on initial data $U_0\in L^{\infty}(\bbr)\cap BV_{loc}(\bbr)$ and $( \frac{d}{dx} U^{\eps}_0 )_{+} \in L^{2}(\bbr)$ for the improvement.\\

The rest of the paper is organized as follows. In Section 2, we present our framework and the relative entropy method. The Section 3 is devoted to the proof of Theorem \ref{main thm}. In Section 4, we prove Theorem \ref{thm-example} by constructing a specific flux function and initial data. In Section 5, we present the proof for the Theorem \ref{thm-improve}.  

\section{Preliminaries}
\setcounter{equation}{0}

\subsection{moving frame} 
For simplicity of the proof of the main results, we consider a moving framework along the drift Lipschitz curve $X$. More precisely, we employ a new function $V$ as follows:
\[
V(t,x):=U(t, x+X(t)),
\]
where $U$ is a solution to \eqref{main}. Then, we can easily check that $V$ verifies
\begin{align}
\begin{aligned} \label{V-eq}
&\partial_t V -\dot{X}(t) \partial_x V + \partial_x A(V) = \partial^2_{xx} V, \quad t>0,~x\in \bbr,\\
&V(0,x) = U_0(x).
\end{aligned}
\end{align}
 
\subsection{relative entropy method}
In this part, we present the $L^2$-framework as the following lemma, based on the relative entropy method.
\begin{lemma}\label{lem-entropy}
Let $S_1$ be a viscous shock given by \eqref{layer}. Then, the shock $S_1$ is a monotone function and $y=S_1(x)$ is an admissible change of variable. If we use $w$ defined by
\beq\label{w}
w(t, S_1(x)):= V(t,x) -  S_1(x),
\eeq
then the solution $V$ of \eqref{V-eq} satisfies 
\begin{align}
\begin{aligned}\label{main-energy}
\frac{d}{dt} \int_{-\infty}^{\infty} |V-S_1|^2 dx + D(t) =0,
\end{aligned}
\end{align}
where the dissipation $D(t)$ is given by 
\begin{align}
\begin{aligned} \label{dissipate}
D(t) &= 2 (\dot{X}(t) -\sigma) \int_{u_{+}}^{u_{-}} w dy -2  \int_{u_{+}}^{u_{-}} A(w+y|y) dy  \\
 &\quad -2  \int_{u_{+}}^{u_{-}} \Big( A(y) -A(u_{-}) -\sigma (y -u_{-}) \Big) |\partial_y w|^2 dy.
\end{aligned}
\end{align}
\end{lemma}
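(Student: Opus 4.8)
The plan is to derive \eqref{main-energy}--\eqref{dissipate} in three moves: (i) differentiate $\int_\RR|V-S_1|^2\,dx$ in time in the $x$-variable, using only the two equations \eqref{V-eq} and \eqref{layer-0}; (ii) transport the resulting identity through the change of variable $y=S_1(x)$; and (iii) rewrite the one convective integral that is not yet in the desired form via a short antiderivative identity. For the change of variable itself, \eqref{s-1-1} already gives $S_1'<0$ on $\RR$, and since $S_1$ connects $u_-$ to $u_+$ its range is $(u_+,u_-)$; hence $x\mapsto y:=S_1(x)$ is a smooth decreasing diffeomorphism of $\RR$ onto $(u_+,u_-)$, which is the meaning of ``admissible change of variable''. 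It is convenient to record from \eqref{layer} that $S_1'(x)=G(S_1(x))$ where $G(y):=A(y)-A(u_-)-\sigma(y-u_-)$, so $G<0$ on $(u_+,u_-)$ and $G(u_\pm)=0$; and that, along $y=S_1(x)$, the relation \eqref{w} gives $\partial_x(V-S_1)=G(y)\,\partial_y w$ and $A(V)-A(S_1)=A(w+y)-A(y)$.

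For (i): since $S_1$ is $t$-independent, $\frac{d}{dt}\int_\RR(V-S_1)^2\,dx=2\int_\RR(V-S_1)\,\partial_t V\,dx$ (this makes sense because parabolic well-posedness keeps $V-S_1$ in $C([0,\infty);L^2(\RR))$). I would substitute $\partial_t V=\dot X\,\partial_x V-\partial_x A(V)+\partial^2_{xx}V$ from \eqref{V-eq}, integrate by parts in the viscous and convective integrals, and use \eqref{layer-0} in the form $\partial^2_{xx}S_1=\partial_x A(S_1)-\sigma\,\partial_x S_1$ to rewrite the term $2\int_\RR(V-S_1)\,\partial^2_{xx}S_1\,dx$. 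After regrouping --- the $\partial_x A(S_1)$ contribution joining the convective integral, the $\sigma\,\partial_x S_1$ contribution joining the transport one --- this yields
\begin{align*}
\frac{d}{dt}\int_\RR(V-S_1)^2\,dx={}&2(\dot X-\sigma)\int_\RR(V-S_1)\,\partial_x S_1\,dx+2\int_\RR\partial_x(V-S_1)\,[A(V)-A(S_1)]\,dx\\
&-2\int_\RR|\partial_x(V-S_1)|^2\,dx,
\end{align*}
all boundary terms at $x=\pm\infty$ vanishing; this is the relative-entropy identity for the entropy $\eta(u)=u^2$.

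For (ii): I would substitute $y=S_1(x)$, using $dx=dy/G(y)$ and the fact that $x:-\infty\to+\infty$ corresponds to $y:u_-\to u_+$, together with the formulas above; the right-hand side above becomes
\begin{align*}
-2(\dot X-\sigma)\int_{u_+}^{u_-}w\,dy\ -\ 2\int_{u_+}^{u_-}\partial_y w\,[A(w+y)-A(y)]\,dy\ +\ 2\int_{u_+}^{u_-}G(y)\,|\partial_y w|^2\,dy.
\end{align*}
Since $G(y)=A(y)-A(u_-)-\sigma(y-u_-)$, the first and the third terms already match the corresponding pieces of $-D(t)$ in \eqref{dissipate}, so for (iii) it only remains to prove
\begin{align*}
-2\int_{u_+}^{u_-}\partial_y w\,[A(w+y)-A(y)]\,dy=2\int_{u_+}^{u_-}A(w+y|y)\,dy.
\end{align*}
For this, fix $t$ and set $H(y):=\int_0^{w(t,y)}\bigl(A(s+y)-A(y)\bigr)\,ds$. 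Differentiating, and evaluating $\int_0^{w}\bigl(A'(s+y)-A'(y)\bigr)\,ds=A(w+y)-A(y)-A'(y)w=A(w+y|y)$, gives $\partial_y H=[A(w+y)-A(y)]\,\partial_y w+A(w+y|y)$, hence $\partial_y w\,[A(w+y)-A(y)]=\partial_y H-A(w+y|y)$; integrating over $(u_+,u_-)$ and using $H(u_\pm)=0$ (which holds because $w(t,u_\pm)=0$) gives the identity. Combining (i)--(iii) produces exactly \eqref{main-energy}--\eqref{dissipate}.

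The only genuine obstacle is the justification of the integrations by parts: one needs $V-S_1$ and $\partial_x(V-S_1)$ to vanish at $x=\pm\infty$, equivalently $w(t,u_\pm)=0$ and the vanishing of the boundary terms in (i)--(ii). For $t>0$ this follows from parabolic smoothing, which makes $V-S_1$ smooth with all derivatives decaying at $x=\pm\infty$; the identity then extends to $t=0$ by continuity, or one first runs the argument on smooth, fast-decaying approximations of $U_0$ and passes to the limit. Apart from this, the substantive point is the algebraic identity of step (iii) --- immediate once the primitive $H$ is spotted --- together with the bookkeeping required to check that the change of variable reproduces precisely the form \eqref{dissipate}.
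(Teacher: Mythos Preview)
Your proof is correct. The approach is essentially the same as the paper's --- differentiate the $L^2$ relative entropy, use \eqref{V-eq} and \eqref{layer-0}, then change variables via $y=S_1(x)$ --- but the execution differs in one point worth noting. The paper sets up the computation for a general convex entropy $\eta$, introducing the relative entropy flux $F(u,v)=G(u)-G(v)-\eta'(v)(A(u)-A(v))$ and deriving the identity in which the relative flux $A(V|S_1)$ appears \emph{before} the change of variables, namely
\[
\frac{d}{dt}\int_\RR|V-S_1|^2\,dx = 2(\dot X-\sigma)\int_\RR(V-S_1)S_1'\,dx - 2\int_\RR A(V|S_1)\,S_1'\,dx - 2\int_\RR|\partial_x(V-S_1)|^2\,dx,
\]
after which the change of variables is a pure substitution. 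You instead work with $\eta(u)=u^2$ from the start, arrive at the convective term in the form $2\int_\RR\partial_x(V-S_1)\,[A(V)-A(S_1)]\,dx$, carry it through the change of variables, and only then convert it to the relative-flux form via your primitive $H$. Your route is more elementary (no entropy-flux machinery) at the cost of the extra step~(iii); the paper's route places the algebra earlier and shows in passing why the quadratic entropy is the natural choice. Both lead to exactly \eqref{dissipate}.
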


The remaining part of this section is devoted to the proof of Lemma \ref{lem-entropy}. Even though our framework is based on the $L^2$-norm, we here present the general case of the relative entropy $\eta(\cdot|\cdot)$ for a given entropy $\eta$. Then, we will focus on the quadratic entropy and explain why the choice of quadratic entropy is essential. Concerning the following relative entropy method, we refer to \cite{D}, \cite{L} and \cite{V-1}. \\
For any strictly convex entropy $\eta$ of $\eqref{main}$, we define the associated relative entropy function by
\[
\eta(u|v)=\eta(u)-\eta(v) -\eta^{\prime}(v) (u-v).
\]
Let $F(\cdot,\cdot)$ be the flux of the relative entropy defined by
\[
F(u,v) = G(u)-G(v) -\eta^{\prime} (v) (A(u)-A(v)),
\]
where $G$ is the entropy flux of $\eta$, i.e., $G^{\prime} = \eta^{\prime} A^{\prime}$.\\
We want to investigate the relative entropy between the solution $V$ of \eqref{V-eq} and the viscous shock $S_1$ defined in \eqref{layer}. We first notice that since $S_1$ does not depend on $t$,
\begin{align*}
\begin{aligned} 
\partial_t \eta(V|S_1) = (\eta^{\prime}(V) - \eta^{\prime}(S_1)) \partial_t V.
\end{aligned}
\end{align*}
We add the term concerning $S_1$ to the above equation by using \eqref{layer-0} and $\partial_{v}\eta(u|v) = -\eta^{\prime\prime} (v) (u-v)$, that is,
\begin{align*}
\begin{aligned} 
\partial_t \eta(V|S_1)  = (\eta^{\prime}(V) - \eta^{\prime}(S_1)) \partial_t V + \eta^{\prime\prime}(S_1) (V-S_1) (-\sigma S_1^{\prime} + A(S_1)^{\prime} - S_1^{\prime\prime}).
\end{aligned}
\end{align*} 
Then we use \eqref{V-eq} to get
\begin{align*}
\begin{aligned} 
\partial_t \eta(V|S_1) & =  (\eta^{\prime}(V) - \eta^{\prime}(S_1))  (\dot{X}(t)\partial_x V -A^{\prime}(V)\partial_x V+ \partial_{xx}V ) \\
& \qquad + \eta^{\prime\prime}(S_1) (V-S_1) (-\sigma S_1^{\prime} + A^{\prime}(S_1)S_1^{\prime} - S_1^{\prime\prime}).
\end{aligned}
\end{align*} 
If we use the relative flux defined by 
\[
A(u|v):=A(u)-A(v)-A^{\prime}(v)(u-v),
\]
and 
\begin{align*}
\begin{aligned} 
\partial_x F(V,S_1) &= \eta^{\prime}(V)A^{\prime}(V)\partial_x V - \eta^{\prime}(S_1)A^{\prime}(S_1)S_1^{\prime}\\
&\quad -\eta^{\prime\prime}(S_1)S_1^{\prime} (A(v)-A(S_1)) -\eta^{\prime} (S_1) (A^{\prime} (V)\partial_x V - A^{\prime}(S_1) S_1^{\prime}),
\end{aligned}
\end{align*} 
then we have
\begin{align*}
\begin{aligned} 
-\partial_x F(V,S) -\eta^{\prime\prime} (S_1) S_1^{\prime}A(V|S) = - (\eta^{\prime}(V) - \eta^{\prime}(S_1))A^{\prime}(V)\partial_x V + \eta^{\prime\prime}(S_1) (V-S_1)A^{\prime}(S_1) S_1^{\prime}. 
\end{aligned}
\end{align*} 
Thus we have
\begin{align}
\begin{aligned}\label{eta-est-0} 
\partial_t \eta(V|S_1)&=\dot{X}(t) (\eta^{\prime}(V) - \eta^{\prime}(S_1)) \partial_x V -\sigma \eta^{\prime\prime}(S_1) (V-S_1) S_1^{\prime}  -\partial_x F(V,S)\\
& \qquad   -\eta^{\prime\prime} (S_1) S_1^{\prime}A(V|S_1)+  (\eta^{\prime}(V) - \eta^{\prime}(S_1)) \partial_{xx}V -\eta^{\prime\prime}(S_1) (V-S_1) S_1^{\prime\prime}.
\end{aligned}
\end{align}
We now integrate \eqref{eta-est-0} in $x$ to get
\begin{align}
\begin{aligned}\label{eta-est} 
&\frac{d}{dt}  \int_{-\infty}^{\infty} \eta(V|S_1) dx\\
&\quad =\dot{X}(t) \int_{-\infty}^{\infty} (\eta^{\prime}(V) - \eta^{\prime}(S_1)) \partial_x V dx -\sigma  \int_{-\infty}^{\infty}  \eta^{\prime\prime}(S_1) (V-S_1) S_1^{\prime}  dx\\
& \qquad   - \int_{-\infty}^{\infty}\eta^{\prime\prime} (S_1) S_1^{\prime}A(V|S) dx+\int_{-\infty}^{\infty}\Big( (\eta^{\prime}(V) - \eta^{\prime}(S_1)) \partial_{xx}V -\eta^{\prime\prime}(S_1) (V-S_1) S_1^{\prime\prime} \Big) dx.
\end{aligned}
\end{align}
From now on, we only consider the quadratic entropy $\eta(u)=u^2$. This choice ensures that the parabolic term induces a positive dissipation. Moreover, since
\begin{align*}
\begin{aligned}
& 2\dot{X}(t) \int_{-\infty}^{\infty} (V - S_1) \partial_x V dx -2\sigma  \int_{-\infty}^{\infty}   (V-S_1) S_1^{\prime}  dx\\
&\qquad = \dot{X}(t) \int_{-\infty}^{\infty}  \partial_x(V-S_1)^2 dx+ 2 (\dot{X}(t) - \sigma) \int_{-\infty}^{\infty}(V-S_1)  S_1^{\prime}dx,
\end{aligned}
\end{align*}
we can reduce \eqref{eta-est} to 
\begin{align}
\begin{aligned} \label{main-energy-1}
\frac{d}{dt} \int_{-\infty}^{\infty} |V-S_1|^2 dx &= 2 (\dot{X}(t) - \sigma) \int_{-\infty}^{\infty}(V-S_1)  S_1^{\prime}dx - 2\int_{-\infty}^{\infty}A(V|S_1)S_1^{\prime} dx \\
&\quad - 2 \int_{-\infty}^{\infty}|\partial_x(V-S_1)|^2 dx\\
&=: -D(t),
\end{aligned}
\end{align}
where the $D(t)$ denotes the dissipation term.\\
We now use the change of variable $y=S_1(x)$, which is admissible thanks to \eqref{s-1-1}. Thus, if we define $w$ as the perturbation $V-S_1$ by
\[
w(t, S_1(x)):= V(t,x) -  S_1(x),
\]
then the dissipation $D(t)$ in \eqref{main-energy-1} becomes \eqref{dissipate}. This completes the proof of  Lemma \ref{lem-entropy}.

\section{Proof of Theorem \ref{main thm}} 
We first prove the contraction \eqref{contract} for any initial perturbation $U_0-S_1\in L^2$, then derive decay estimate \eqref{convergence}, for which we only need an additional assumption $U_0-S_1\in L^1$.
\subsection{Contraction for viscous shock}
In this part, we show the contraction by estimating the dissipation $D(t)$ to be nonnegative. We consider the perturbed quadratic flux $A(U)$ in the sense \eqref{quad}, i.e.,
\beq\label{aaa}
A(U) =a U^2 + g(U), \quad a>0,
\eeq
with any $C^2$-function $g$  satisfying $\|g^{\prime\prime}\|_{L^{\infty}(\bbr)} < \frac{2}{11}a$.\\
For the flux $A$ in \eqref{aaa}, the dissipation $D(t)$ in \eqref{dissipate} becomes
\begin{align*}
\begin{aligned} 
D(t) &= 2 (\dot{X}(t) -\sigma) \int_{u_{+}}^{u_{-}} w dy - 2\int_{u_{+}}^{u_{-}} \Big(  aw^2 + g(w+y)-g(y) -g^{\prime}(y) w \Big) dy \\
&\quad + 2 a\int_{u_{+}}^{u_{-}}  (u_{-}-y)(y-u_{+})  |\partial_y w|^2 dy\\
&\quad - 2 \int_{u_{+}}^{u_{-}} \underbrace{\Big( g(y) -g(u_{-}) -\frac{g(u_-) - g(u_+)}{u_- -u_+} (y -u_{-}) \Big)}_{J} |\partial_y w|^2 dy\\
&=:\sum_{k=1}^4\mathcal{I}_k.
\end{aligned}
\end{align*}
We want to show $D(t) > 0$ by using a shift function $X$ defined by \eqref{curve}. Then, by \eqref{w}, we have
\begin{align*}
\begin{aligned} 
\dot{X}(t)-\sigma &= -\frac{2a+\|g^{\prime\prime}\|_{L^{\infty}(\bbr)}}{ 2(u_{-}- u_{+})}\int_{-\infty}^{\infty} (V(t,x)- S_1(x)) S_1^{\prime}(x) dx\\
&=\frac{2a+\|g^{\prime\prime}\|_{L^{\infty}(\bbr)}}{ 2(u_{-}- u_{+})}\int_{u_{+}}^{u_{-}} w dy.
\end{aligned}
\end{align*}
We denote by $\bar{w}$  the mean of $w$, i.e., $\displaystyle \bar{w}(t):= \frac{1}{\alpha} \int_{u_{+}}^{u_{-}} w dy$, where $\alpha :=u_{-}-u_{+}$ is the shock strengh.\\
Thus we have
\beq\label{i_1}
 \mathcal{I}_1 =(2a+\|g^{\prime\prime}\|_{L^{\infty}})\alpha\bar{w}^2.
\eeq
For $ \mathcal{I}_2$, since
\begin{align*}
\begin{aligned} 
|g(w+y)-g(y)-g^{\prime}(y)w| \le \|g^{\prime\prime}\|_{L^{\infty}}\frac{ w^2}{2},
\end{aligned}
\end{align*}
we have
\beq\label{i_2}
 \mathcal{I}_2 \ge -(2a+\|g^{\prime\prime}\|_{L^{\infty}}) \int_{u_{+}}^{u_{-}} w^2 dy.
\eeq
We now combine \eqref{i_1} and \eqref{i_2} to get
\begin{align*}
\begin{aligned}
\mathcal{I}_1+\mathcal{I}_2 \ge (2a+\|g^{\prime\prime}\|_{L^{\infty}})\Big(\alpha\bar{w}^2 - \int_{u_{+}}^{u_{-}}w^2 dy\Big) =- (2a+\|g^{\prime\prime}\|_{L^{\infty}}) \int_{u_{+}}^{u_{-}} (w-\bar{w})^2 dy.
\end{aligned}
\end{align*}
For $\mathcal{I}_4$, we rewrite $J$ as
\[
J=\Big( \frac{g(y) -g(u_{-})}{y -u_{-}} -\frac{g(u_+) - g(u_-)}{u_+ -u_-}\Big) (y -u_{-}).
\]
Applying the Taylor theorem to $F(y):=\frac{g(y)-g(u_-)}{y-u_-}$ and then to $g$, we have 
\begin{align*}
\begin{aligned}
J= F'(y_*) (y-u_+) (y-u_-)=-\frac{1}{2}g''(y_{**})(y-u_+) (y-u_-),
\end{aligned}
\end{align*}
which yields
\beq\label{I_4}
|\mathcal{I}_4|\le 2\int_{u_{+}}^{u_{-}} |J| |\partial_y w|^2 dy \le \|g^{\prime\prime}\|_{L^{\infty}(\bbr)} \int_{u_{+}}^{u_{-}}  (u_{-}- y)(y-u_{+})|\partial_y w|^2 dy.
\eeq
Therefore, the dissipation $D(t)$ can be estimated as
\begin{align*}
\begin{aligned} 
D(t) &\geq- (2a+ \|g^{\prime\prime}\|_{L^{\infty}(\bbr)}) \int_{u_{+}}^{u_{-}} (w-\bar{w})^2 dy\\
&\qquad + (2 a- \|g^{\prime\prime}\|_{L^{\infty}(\bbr)})\int_{u_{+}}^{u_{-}}  (u_{-}-y)(y-u_{+})  |\partial_y w|^2 dy.
\end{aligned}
\end{align*}
To complete $D(t)\ge 0$, we use the weighted Poincar\'e type inequality in the following lemma. Indeed, applying Lemma \ref{lem-poincare} to the above estimate, we have
\begin{align}\label{d-2}
\begin{aligned} 
D(t) &\geq \lambda \int_{u_{+}}^{u_{-}}  (u_{-}-y)(y-u_{+})  |\partial_y w|^2 dy,
\end{aligned}
\end{align}
where 
\[
\lambda = 2a- 11 \|g^{\prime\prime}\|_{L^{\infty}(\bbr)}.
\]
Since $\|g^{\prime\prime}\|_{L^{\infty}(\bbr)}\le \frac{2}{11}a$, we have
\[
\frac{d}{dt} \int_{-\infty}^{\infty} |V-S_1|^2 dx = -D(t) \le 0,
\]
which implies the contraction \eqref{contract}. Hence it remains to prove the following lemma.
\begin{lemma}\label{lem-poincare}
For any $u\in C^{1}([u_+, u_-])$, the following inequality holds.
\[
\int_{u_{+}}^{u_{-}} (u-\bar{u})^2 dx \le\frac{5}{6} \int_{u_{+}}^{u_{-}}  (u_{-}- x)(x-u_{+})|u^{\prime}|^2 dx.
\] 
where $\bar{u}$ is the mean of $u$ over $[ u_{+},  u_{-}]$.
\end{lemma}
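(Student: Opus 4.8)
The plan is to reduce the inequality to a one-dimensional weighted Poincaré (or Hardy-type) estimate on a fixed interval by normalization, and then prove it by an explicit computation. First I would normalize: by translating and scaling the independent variable, it suffices to prove the inequality on the interval $[0,1]$, i.e.\ that for all $u\in C^1([0,1])$,
\[
\int_0^1 (u-\bar u)^2\,dx \le \frac{5}{6}\int_0^1 x(1-x)\,|u'(x)|^2\,dx,
\]
where $\bar u=\int_0^1 u$. Indeed, if $y=u_++(u_--u_+)x$, both sides of the original inequality scale like $(u_--u_+)^3$ times the corresponding integrals in $x$, and the weight $(u_--y)(y-u_+)$ becomes $(u_--u_+)^2 x(1-x)$; the factor $(u_--u_+)$ from $dx$ combined with the $(u_--u_+)^2$ from the weight matches the $(u_--u_+)^3$ coming from $dy$ on the left, so the constant $5/6$ is dimensionless and unchanged. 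Replacing $u$ by $u-\bar u$ we may also assume $\bar u=0$, so the claim is $\int_0^1 u^2 \le \frac56\int_0^1 x(1-x)|u'|^2$ whenever $\int_0^1 u=0$.

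Next I would prove the normalized inequality. One clean route is to identify the sharp constant as $1/\mu_1$, where $\mu_1$ is the first nonzero eigenvalue of the singular Sturm–Liouville problem $-(x(1-x)u')' = \mu u$ on $(0,1)$ with the natural (finite-energy) boundary conditions; this is the Legendre/Jacobi operator, whose eigenvalues are $\mu_n=n(n+1)$ with eigenfunctions the Legendre polynomials $P_n$ (shifted to $[0,1]$). The eigenfunction for $n=1$ is the affine function $P_1(x)\propto x-\tfrac12$, which has mean zero, so the optimal constant in the zero-mean class is $1/\mu_1 = 1/2$. Since $1/2 \le 5/6$, the inequality follows; the constant $5/6$ is comfortably non-optimal, which is presumably why the authors chose it for simplicity. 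To make the argument self-contained without invoking spectral theory, I would instead give a direct proof: expand $u$ in shifted Legendre polynomials $u=\sum_{n\ge 1} c_n P_n$ (the $n=0$ term vanishes because $\bar u=0$), use orthogonality $\int_0^1 P_m P_n = \delta_{mn}/(2n+1)$ and the identity $\int_0^1 x(1-x) P_n' P_m' = \frac{n(n+1)}{2n+1}\delta_{mn}$ (integration by parts against the Legendre ODE), so that $\int_0^1 u^2 = \sum \frac{c_n^2}{2n+1}$ and $\int_0^1 x(1-x)|u'|^2 = \sum \frac{n(n+1)c_n^2}{2n+1} \ge 2\int_0^1 u^2$, giving even the sharp constant $1/2$.

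If one prefers to avoid orthogonal polynomials entirely, an alternative elementary proof writes $u(x)=\int_0^1 K(x,t)u'(t)\,dt$ for a suitable kernel after subtracting the mean, or more simply uses the substitution $x=\sin^2\theta$, $\theta\in[0,\pi/2]$, which turns $x(1-x)|u'|^2\,dx$ into $\frac14|u_\theta|^2\,d\theta$ up to a bounded factor and $dx$ into $2\sin\theta\cos\theta\,d\theta$, reducing matters to a standard Poincaré/Wirtinger inequality on an interval with a weight; tracking constants there yields a bound with some explicit constant, which one then checks is $\le 5/6$. I would present the Legendre-expansion argument as the main line since it is short and gives a clean constant.

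The main obstacle is essentially bookkeeping rather than conceptual: making the normalization and the scaling of the weight completely rigorous (so the reader is convinced the constant is genuinely scale-invariant), and—if the elementary substitution route is used instead of the Legendre route—carefully controlling the extra bounded-but-nonconstant factors so that the final numerical constant really comes out below $5/6$. With the Legendre approach, the only nontrivial input is the identity $\int_0^1 x(1-x)P_n'P_m' = \frac{n(n+1)}{2n+1}\delta_{mn}$, which follows by one integration by parts using $-(x(1-x)P_n')' = n(n+1)P_n$, so there is no real difficulty.
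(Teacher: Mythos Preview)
Your proposal is correct and takes a genuinely different route from the paper's proof. The paper proceeds in a completely elementary way: setting $v=u-\bar u$, it writes $v(x)=\frac{1}{\alpha}\int_{u_+}^{u_-}\int_y^x v'(z)\,dz\,dy$ (from mean zero and the fundamental theorem of calculus), squares, applies Cauchy--Schwarz with indicator functions $\chi_{I(x,y)}$, and then unwinds everything by repeated Fubini to obtain an explicit polynomial weight in $z$; the final identity is
\[
\int_{u_+}^{u_-} v^2\,dx \le \frac{1}{6\alpha^2}\int_{u_+}^{u_-}\Big(5\alpha^2+4(z-u_+)(z-u_-)\Big)(u_--z)(z-u_+)|v'(z)|^2\,dz,
\]
and dropping the nonpositive second term gives exactly $5/6$. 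No spectral information enters.

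Your Legendre/spectral argument is cleaner and even yields the sharp constant $1/2$ (attained at the affine function $x-\tfrac12$), so the paper's $5/6$ is revealed as non-optimal, which you correctly note. The trade-off is that the paper's computation is entirely self-contained with calculus and Cauchy--Schwarz, whereas your main line implicitly uses completeness of the (shifted) Legendre system and that $C^1$ functions lie in the form domain of the Legendre operator; these are standard, but should be stated. One small quibble: your scaling bookkeeping is slightly garbled (the left-hand side picks up a single factor of $u_--u_+$ from $dy$, not $(u_--u_+)^3$; on the right the $\alpha^2$ from the weight and the $\alpha$ from $dy$ are offset by the $\alpha^{-2}$ from $|u'(y)|^2=|\tilde u'|^2/\alpha^2$, again leaving a single $\alpha$), but your conclusion that the constant is scale-invariant is correct.
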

\begin{proof}
Let $v:=u-\bar{u}$. We start with the fundamental theorem of calculus:
\[
v(x) = v(y) +   \int _y^x v^{\prime}(z) dz,
\]
Since $v$ has mean zero, integrating this equality in $y$, we have 
\beq\label{v-1}
v(x)=\frac{1}{\alpha}\int_{u_{+}}^{u_{-}} \int _y^x v^{\prime}(z) dzdy.
\eeq
To compute the $L^2$-norm of $v$, we use the indicator function $\chi_{I(a,b)}$ defined on the interval $I(a,b): = [\min\{a,b\}, \max\{a,b\}]$, i.e.,
\begin{align*}
\begin{aligned} 
\chi_{I(a,b)}=\left\{ \begin{array}{ll}
          \chi_{[a,b]} & \mbox{if $a\le b$},\\
         \chi_{[b,a]} & \mbox{if $a>b$}.\end{array} \right.
\end{aligned}
\end{align*}
Then, we have from \eqref{v-1} that
\begin{align*}
\begin{aligned} 
 \int_{u_{+}}^{u_{-}} v^2 dx&= \frac{1}{\alpha^2}\int_{u_{+}}^{u_{-}} \Big| \int_{u_{+}}^{u_{-}} \int _y^x v^{\prime}(z) dzdy \Big|^2 dx\\
 &\le \frac{1}{\alpha^2}\int_{u_{+}}^{u_{-}} \Big| \int_{u_{+}}^{u_{-}} \int_{u_{+}}^{u_{-}} |v^{\prime}(z)| \chi_{I(x,y)}  dzdy \Big|^2 dx\\
 &\le \frac{1}{\alpha^2}\int_{u_{+}}^{u_{-}} \Big( \int_{u_{+}}^{u_{-}} \int_{u_{+}}^{u_{-}} \chi_{I(x,y)}  dzdy \Big)  \Big( \int_{u_{+}}^{u_{-}} \int_{u_{+}}^{u_{-}} |v^{\prime}(z)|^2 \chi_{I(x,y)}  dzdy \Big)dx,
 \end{aligned}
\end{align*}
where the last inequality is due to the Cauchy-Schwarz inequality.\\
Notice that it follows from the definition of $\chi_{I(a,b)}$ that for any integrable function $f$ and fixed $x\in [u_{+}, u_{-}]$,
\begin{align*}
\begin{aligned} 
\int_{u_{+}}^{u_{-}} \int_{u_{+}}^{u_{-}} \chi_{I(x,y)}(z) f(x,y,z)  dzdy &= \int_{u_{+}}^x \int_{u_{+}}^{u_{-}} \chi_{I(x,y)}f dzdy +\int_{x}^{u_{-}} \int_{u_{+}}^{u_{-}} \chi_{I(x,y)} f  dzdy \\
&=  \int_{u_{+}}^{x} \int _y^x f dzdy + \int_{x}^{u_{-}} \int _x^y f dzdy.
\end{aligned}
\end{align*}
Thus, applying the equality above with $f=1, | v^{\prime}(z)|^2$ twice, we have
\begin{align*}
\begin{aligned} 
 &\frac{1}{\alpha^2}\int_{u_{+}}^{u_{-}} \Big( \int_{u_{+}}^{u_{-}} \int_{u_{+}}^{u_{-}} \chi_{I(x,y)}  dzdy \Big)  \Big( \int_{u_{+}}^{u_{-}} \int_{u_{+}}^{u_{-}} |v^{\prime}(z)|^2 \chi_{I(x,y)}  dzdy \Big)dx \\
&\quad = \frac{1}{\alpha^2}\int_{u_{+}}^{u_{-}}  \Big(  \underbrace{\int_{u_{+}}^{x} \int _y^x 1 dzdy + \int_{x}^{u_{-}} \int _x^y 1 dzdy}_{I} \Big)\\
&\hspace{2cm} \times   \Big(  \underbrace{\int_{u_{+}}^{x} \int _y^x |v^{\prime}(z)|^2 dzdy +  \int_{x}^{u_{-}} \int _x^y | v^{\prime}(z)|^2 dzdy}_{II} \Big) dx.
\end{aligned}
\end{align*}
We use the Fubini's theorem to compute
\begin{align*}
\begin{aligned} 
II &=\int_{u_{+}}^{x}  \int_{u_{+}}^{z} |v^{\prime}(z)|^2 dydz +  \int_{x}^{u_{-}}  \int_{z}^{u_{-}}  | v^{\prime}(z)|^2 dydz\\
&= \int_{u_{+}}^{x}  (z-u_{+}) |v^{\prime}(z)|^2 dz + \int_{x}^{u_{-}} (u_{-}-z) | v^{\prime}(z)|^2 dz.
\end{aligned}
\end{align*}
Since 
\[
I= \frac{(x-u_{+})^2}{2} +\frac{(x-u_{-})^2}{2},
\]
we have
\begin{align*}
\begin{aligned} 
\int_{u_{+}}^{u_{-}} v^2 dx&\le \frac{1}{2\alpha^2} \int_{u_{+}}^{u_{-}}\int_{u_{+}}^{x} \Big( {(x-u_{+})^2} +(x-u_{-})^2 \Big) (z-u_{+}) |v^{\prime}(z)|^2 dz dx \\
&\hspace{2cm}  + \frac{1}{2\alpha^2} \int_{u_{+}}^{u_{-}}\int_{x}^{u_{-}}\Big( (x-u_{+})^2 +(x-u_{-})^2 \Big) (u_{-}-z) | v^{\prime}(z)|^2 dzdx \\
&=: \mathcal{I}_1 + \mathcal{I}_2.
\end{aligned}
\end{align*}
Using the Fubini's theorem again, we have
\begin{align}
\begin{aligned}\label{i-1} 
 \mathcal{I}_1=\frac{1}{2\alpha^2} \int_{u_{+}}^{u_{-}}   \Big(  \underbrace{\int_{z}^{u_{-}} ( (x-u_{+})^2+(x-u_{-})^2 ) dx}_{J} \Big) (z-u_{+}) |v^{\prime}(z)|^2  dz
\end{aligned}
\end{align}
Since the simple computation yields
\begin{align*}
\begin{aligned} 
J &= \int_{z}^{u_{-}} (2x^2 -2(u_++u_-)x + (u_+^2 +u_-^2) )dx\\
&=\frac{1}{3} (u_- - z)\Big( 2(u_-^2 +u_- z + z^2) -3(u_+ +u_-)(u_- + z) + 3(u_+^2 +u_-^2) \Big)\\
&=\frac{1}{3} (u_- - z)(2u_-^2 + 3u_+^2 + 2z^2 -3u_-u_+ -3u_+z -u_-z), 
\end{aligned}
\end{align*}
we have
\begin{align*}
\begin{aligned} 
 \mathcal{I}_1=\frac{1}{6\alpha^2} \int_{u_{+}}^{u_{-}}  (2u_-^2 + 3u_+^2 + 2z^2 -3u_-u_+ -3u_+z -u_-z)(u_- - z)(z-u_{+}) |v^{\prime}(z)|^2  dz.
\end{aligned}
\end{align*}
By using symmetry of $ \mathcal{I}_1$ and $ \mathcal{I}_2$, we can easily get 
\begin{align*}
\begin{aligned} 
 \mathcal{I}_2=\frac{1}{6\alpha^2}\int_{u_{+}}^{u_{-}} (3u_-^2 + 2u_+^2 + 2z^2 -3u_-u_+ -u_+z -3u_-z)(u_- - z)(z-u_{+})  | v^{\prime}(z)|^2 dz .
\end{aligned}
\end{align*}
Indeed, if we use the notation
\[
I(a,b):= \int_{z}^{b} ( (x-a)^2+(x-b)^2 )  (z-a) |v^{\prime}(z)|^2 dx,
\]
the $\mathcal{I}_1$ in \eqref{i-1} can be written as $\mathcal{I}_1=\frac{1}{2\alpha^2} \int_{u_{+}}^{u_{-}} I(u_+, u_-) dz$. Then we use the Fubini's theorem to get
\begin{align*}
\begin{aligned} 
 \mathcal{I}_2&=\frac{1}{2\alpha^2} \int_{u_{+}}^{u_{-}}  \int_{u_{+}}^z ( (x-u_{+})^2+(x-u_{-})^2 )  (u_{-}-z) |v^{\prime}(z)|^2 dx dz\\
 &=\frac{1}{2\alpha^2} \int_{u_{+}}^{u_{-}} I( u_-,u_+) dz.
\end{aligned}
\end{align*}
Finally, we combine $ \mathcal{I}_1$ with $ \mathcal{I}_2$ above to have
\begin{align*}
\begin{aligned} 
 \int_{u_{+}}^{u_{-}} v^2 dx &=\frac{1}{6\alpha^2} \int_{u_{+}}^{u_{-}} \Big( 5(u_- -u_+)^2 +4(z- u_+)(z-u_-) \Big)  (u_{-}- z)(z-u_{+})|v^{\prime}(z)|^2 dz \\
&\le\frac{5}{6} \int_{u_{+}}^{u_{-}} (u_{-}- z)(z-u_{+})|v^{\prime}(z)|^2 dz - \frac{4}{6\alpha^2}\int_{u_{+}}^{u_{-}} \Big( (u_{-}- z)(z-u_{+}) \Big)^2 |v^{\prime}(z)|^2 dz\\
&\le\frac{5}{6} \int_{u_{+}}^{u_{-}} (u_{-}- z)(z-u_{+})|v^{\prime}(z)|^2 dz.
\end{aligned}
\end{align*}
\end{proof}

\begin{remark}
This kind of inequality has been handled in a more general setting \cite{C-W}, but does not provide a generic constant concretely as $\frac{5}{6}(<1)$ in our inequality. 

\end{remark}

\subsection{Convergence toward viscous shock} In this part, we derive the decay estimate \eqref{convergence}. First of all, since $\lambda>0$ in \eqref{d-2} by the assumption $\|g^{\prime\prime}\|_{L^{\infty}(\bbr)}< \frac{2}{11}a$, we have a positive dissipation as
\beq\label{1-1}
\frac{d}{dt} \int_{-\infty}^{\infty} |V-S_1|^2 dx \le -\lambda  \int_{u_{+}}^{u_{-}}  (u_{-}- y)(y-u_{+})|\partial_y w|^2 dy.
\eeq
We see that \eqref{layer} and \eqref{quad} yield that
\begin{align*}
\begin{aligned}
&a \int_{u_{+}}^{u_{-}}  (u_{-}- y)(y-u_{+})|\partial_y w|^2 dy \\
&\quad= -\int_{u_{+}}^{u_{-}} \Big( A(y) -A(u_{-}) -\sigma (y -u_{-}) \Big) |\partial_y w|^2 dy\\
&\qquad+\int_{u_{+}}^{u_{-}} \Big( g(y) -g(u_{-}) -\frac{g(u_-) - g(u_+)}{u_- -u_+} (y -u_{-}) \Big) |\partial_y w|^2 dy.
\end{aligned}
\end{align*}
Moreover, using the change of variable \eqref{w} and \eqref{I_4}, we have
\begin{align*}
\begin{aligned}
(a+\frac{1}{2}\|g''\|_{L^{\infty}(\bbr)}) \int_{u_{+}}^{u_{-}}  (u_{-}- y)(y-u_{+})|\partial_y w|^2 dy \ge \int_{-\infty}^{\infty}|\partial_x(V-S_1)|^2 dx,
\end{aligned}
\end{align*}
which together with \eqref{1-1} implies that
\beq\label{ineq-1}
\frac{d}{dt} \int_{-\infty}^{\infty} |V-S_1|^2 dx \le -\alpha \int_{-\infty}^{\infty}|\partial_x(V-S_1)|^2 dx,
\eeq
where 
\[
\alpha:=\frac{\lambda}{a+\frac{1}{2}\|g''\|_{L^{\infty}(\bbr)}} >0.
\]
To get the decay estimate, we will use the Gagliardo-Nirenberg interpolation inequality:
\beq\label{GN}
\|V-S_1\|_{L^2(\bbr)} \le C \|V-S_1\|_{L^1(\bbr)}^{2/3} \|\partial_x(V-S_1)\|_{L^2(\bbr)}^{1/3},
\eeq

%\begin{lemma} \label{lem-ineq}
%If $u\in H^1(\bbr)\cap L^{1}(\bbr)$, then
%\[
%\|u\|_{L^2(\bbr)}^3 \le 2 \|u\|_{L^1(\bbr)}^2\|u^{\prime}\|_{L^2(\bbr)}.
%\]
%\end{lemma}
%\begin{proof}
%Since 
%\[
%u^2(x) = \int_{-\infty}^x (u^2(y))^{\prime} dy=2\int_{-\infty}^x u (y) u^{\prime} (y) dy,
%\]
%we have
%\[
%\|u\|_{L^{\infty}(\bbr)}^2 \le 2\int_{\bbr} |u (y)| |u^{\prime} (y)| dy \le 2\|u\|_{L^2(\bbr)}\|u^{\prime}\|_{L^2(\bbr)}.
%\]
%Then this yields
%\begin{align*}
%\begin{aligned} 
%\|u\|_{L^2(\bbr)}^4 &\le \|u\|_{L^{\infty}(\bbr)}^2 \|u\|_{L^1(\bbr)}^2 \\
%&\le 2  \|u\|_{L^1(\bbr)}^2 \|u\|_{L^2(\bbr)}\|u^{\prime}\|_{L^2(\bbr)}.
%\end{aligned}
%\end{align*}
%\end{proof}

To this end, we first control $\|V-S_1\|_{L^1(\bbr)}$ as follows. 

\subsubsection{$L^1$-uniform bound of $V-S_1$} We here use the Lemma \ref{lem-k} below for the $L^1$-contraction result to get the $L^1$-estimate of $V-S_1$.\\
For that, we decompose $V-S_1$ as a sum of two parts:
\begin{align}
\begin{aligned}\label{com}
V(t,x) - S_1(x) &= \Big(V(t,x) - S_1(x - \sigma t +X(t)) \Big) + \Big(S_1(x- \sigma t+X(t)) - S_1(x)\Big)\\
& =: w_1 + w_2,
\end{aligned}
\end{align}
where $\sigma$ is the velocity of $S_1$ and $X$ is the shift satisfying \eqref{curve}.\\
To show $L^1$-uniform bound of $w_1$, we use the following lemma on the $L^1$-contraction for solutions to the scalar viscous conservation laws. We refer to \cite{Serre} for its proof based on Kruzkhov entropy pair (see also \cite{K}).
\begin{lemma}\label{lem-k}
Let $u$ and $v$ be solutions to \eqref{main} with Lipschitzian flux $A$. If the initial data $u_0$, $v_0$ satisfy $u_0-v_0\in L^1(\bbr)$, then the following $L^1$-stability holds:
\begin{align}
\begin{aligned}\label{L1}
\|u-v\|_{L^{1}(\bbr)} \le \|u_0-v_0\|_{L^{1}(\bbr)},\quad t>0.
\end{aligned}
\end{align}
\end{lemma}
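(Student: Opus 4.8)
The plan is to carry out the classical regularization argument that underpins the Kruzhkov entropy-pair method; since a complete proof is given in \cite{Serre} (see also \cite{K}), I only outline the steps.

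First I would set $w:=u-v$. Subtracting the two copies of \eqref{main} gives
\begin{align*}
\partial_t w + \partial_x\big(A(u)-A(v)\big) = \partial^2_{xx}w,\qquad w(0,\cdot)=u_0-v_0\in L^1(\bbr),
\end{align*}
and, $A$ being Lipschitz, $\big|A(u)-A(v)\big|\le \Lip(A)\,|w|$ pointwise. By parabolic regularity $u,v$ are smooth for $t>0$ and, together with their spatial derivatives, decay as $x\to\pm\infty$, so all the integrations by parts below are licit and $w,\partial_x w$ are integrable for $t>0$.

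Next I would fix a family $(\eta_\delta)_{\delta>0}\subset C^2(\bbr)$ of convex regularizations of $s\mapsto |s|$ with $\eta_\delta''\ge 0$, $|\eta_\delta'|\le 1$, $\mathrm{supp}\,\eta_\delta''\subset\{|s|\le\delta\}$, $\eta_\delta\to|\cdot|$ and $\eta_\delta'\to\sgn$ pointwise, and the uniform bound $|s|\,\eta_\delta''(s)\le C$. Multiplying the equation for $w$ by $\eta_\delta'(w)$ and integrating in $x$ yields, after two integrations by parts,
\begin{align*}
\frac{d}{dt}\int_{\bbr}\eta_\delta(w)\,dx = -\int_{\bbr}\eta_\delta''(w)\,|\partial_x w|^2\,dx + \int_{\bbr}\eta_\delta''(w)\,\big(A(u)-A(v)\big)\,\partial_x w\,dx,
\end{align*}
where the first term on the right is $\le 0$, and the second is bounded in absolute value by $C\,\Lip(A)\int_{\{|w|\le\delta\}}|\partial_x w|\,dx$, using $|A(u)-A(v)|\le\Lip(A)|w|$ together with $|w|\,\eta_\delta''(w)\le C$.

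The main point is then to let $\delta\downarrow 0$. Since $\partial_x w=0$ a.e.\ on the level set $\{w=0\}$, the integrand $\mathbf{1}_{\{|w|\le\delta\}}|\partial_x w|$ converges to $0$ a.e., so by dominated convergence (dominating function $|\partial_x w|\in L^1$) the cross term tends to $0$; integrating in time over $[\tau,T]$, passing to the limit $\delta\downarrow 0$, and then sending $\tau\downarrow 0$ to absorb the initial layer (using $L^1$-continuity of the flow at $t=0$), one obtains $\int_{\bbr}|w(T)|\,dx\le\int_{\bbr}|w_0|\,dx$, which is \eqref{L1}. I expect this limit — i.e.\ absorbing the diffusion/flux cross term carried by the singular weight $\eta_\delta''$ — to be the only delicate step; the cleanest way to organize it, and the one followed in \cite{Serre,K}, is to first derive the Kruzhkov inequalities $\partial_t|u-k|+\partial_x\big(\sgn(u-k)(A(u)-A(k))\big)\le\partial^2_{xx}|u-k|$ for all constants $k$ and then apply the doubling-of-variables device, which for two solutions of the same viscous equation reduces to exactly the computation above.
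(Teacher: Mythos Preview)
Your proposal is correct and aligns with the paper's treatment: the paper does not give its own proof of this lemma but simply refers to \cite{Serre} and \cite{K} for the Kruzhkov entropy-pair argument, which is precisely what you outline. Your sketch of the regularization $\eta_\delta\to|\cdot|$, the sign of the diffusion term, and the vanishing of the flux cross term via dominated convergence is the standard route and is sound.
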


Applying \eqref{L1} to our solutions $U(t,x)$ and $S_1(x-\sigma t)$ of the Burgers equation \eqref{main}, we get 
\[
\|U-S_1(\cdot - \sigma t)\|_{L^{1}(\bbr)} \le \|U_0-S_1\|_{L^{1}(\bbr)}.
\] 
Since $U(t,x) = V(t, x-X(t))$, we use the assumption $ U_0-S_1\in L^{1}$ to have
\begin{align}
\begin{aligned}\label{w1}
\|w_1\|_{L^{1}} &= \|V(t, \cdot -X(t)) - S_1(\cdot- \sigma t)\|_{L^1}\\
&= \|U(t, \cdot) - S_1(\cdot- \sigma t)\|_{L^1}\\
&\le \|U_0-S_1\|_{L^{1}}.
\end{aligned}
\end{align}
If we denote 
$$
\tau(t)=\sigma t-X(t),
$$
we have 
$$
w_2(t,x)=S_1(x-\tau(t))-S_1(x).
$$
Since $S_1$ is decreasing,   $w_2(t,x)$ has the same sign as $ \tau(t)$ and 
\begin{align*}
\begin{aligned}
\int_{\bbr} |w_2| dx &= \sgn{(\tau(t))}\int_{\bbr} \Big(S_1(x- \tau(t)) - S_1(x) \Big) dx \\
& = \sgn{(\tau(t))} \int_{\bbr} \int_0^{-\tau(t)} \partial_y S_1(x+y) dydx\\
& = - \int_{\bbr} \int_0^{|\tau(t)|} \partial_y S_1(x+y) dydx.
\end{aligned}
\end{align*}
Then, we use the Fubini's theorem and $S_1(\pm\infty)=u_{\pm}$ to get
\[
\int_{\bbr} |w_2| dx =  |\tau(t)|(u_- - u_+)= |\sigma t-X(t)|(u_- - u_+).
\]
We now need to show the $L^{\infty}$-bound of $X(t)-\sigma t$ to get the $L^1$-uniform bound of $w_2$. 

\subsubsection{$L^{\infty}$-bound of $X(t)-\sigma t$.} We start with 
$$
\|w_2(t)\|^2_{L^2}=\int_{\R}|S_1(x-\tau)-S_1(x)|^2\,dx=: F(\tau),
$$
for $\tau=\sigma t-X(t)$.
The function $F$, as function of the variable $\tau$, is even (as it can be proven by the change of variable $y=x-\tau$ in the integral). For $\tau>0$, we have 
\begin{eqnarray*}
\frac{\partial F}{\partial \tau}(\tau)&=& 2\int_{\R}[S_1(x-\tau)-S_1(x)] \left(-\frac{\partial S_1}{\partial x}\right)(x-\tau)\,dx\\
&=& 2\int_{\R}\int_{x-\tau}^x\left(-\frac{\partial S_1}{\partial y}\right)(y)\,dy\left(-\frac{\partial S_1}{\partial x}\right)(x-\tau)\,dx\\
&=& 2\int_{\R}\int_{x}^{x+\tau}\left(- S'_1\right)(y)\left(- S'_1\right)(x)\,dy\,dx>0,
\end{eqnarray*}
which is positive since $(-S'_1)$ is positive. Moreover for $\tau>1$, we have
$$
\frac{\partial F}{\partial \tau}(\tau)\geq2\int_{\R}\int_{x}^{x+1}(-S'_1)(y) (-S'_1)(x)\,dy\,dx=\beta>0.
$$
Hence, for $\tau>1$
$$
F(\tau)\geq F(1)+\beta (\tau-1)\geq \beta(\tau-1),
$$
and 
$$
|\tau|\leq \frac{F(\tau)}{\beta}+1,
$$
which is still true for $\tau\leq 1$, since this is obvious for $\tau\in (-1,1)$, and $F$ is even.\\
For $|\tau|=|\sigma t-X(t)|$, this gives
\[
|X(t)- \sigma t|\le \frac{1}{\beta}\int_{\bbr} |w_2(t,x)|^2 dx+1.
\]
We now use
\[
|V-S_1|^2 = (w_1+w_2)^2 \ge w_2^2 - 2|w_1 w_2|,
\]
and \eqref{contract}, \eqref{w1} and $\|S_1\|_{L^{\infty}}= u_- -u_+$ to get
\begin{align*}
\begin{aligned}
|X(t)- \sigma t|&\le \frac{1}{\beta} \int_{\bbr} |w_2(t,x)|^2 dx+1\\
&\le\frac{1}{\beta}( \| V-S_1 \|_{L^2}^2 +  2\| w_1 w_2 \|_{L^1})+1 \\
& \le\frac{1}{\beta}( \| U_0-S_1 \|_{L^2}^2 + 2\|w_2\|_{L^{\infty}} \| w_1\|_{L^1})+1\\
&\le \frac{1}{\beta}( \| U_0-S_1 \|_{L^2}^2 +  4(u_- -u_+)\| U_0-S_1 \|_{L^1})+1.
\end{aligned}
\end{align*}
Therefore, for all $t$, we have
\[
|X(t)- \sigma t |\le C (1 +  \| U_0-S_1 \|_{L^2}^2 +  \| U_0-S_1 \|_{L^1}),
\]
where $C>0$ is a generic constant only depending on $u_-$, $u_+$ and the flux $A$.\\ 
Hence we have from \eqref{com} and estimates above that
\[
\|V-S_1\|_{L^1} \le C (1 +  \| U_0-S_1 \|_{L^2}^2 +  \| U_0-S_1 \|_{L^1}).
\]
For convenience, we put 
\beq\label{cons}
C_0:=C (1 +  \| U_0-S_1 \|_{L^2}^2 +  \| U_0-S_1 \|_{L^1}).
\eeq
\\

We now get from \eqref{GN} that 
\[
\|V-S_1\|_{L^2}^3 \le C\|V-S_1\|_{L^1}^2\|\partial_x (V-S_1)\|_{L^2} \le  C_0^2\|\partial_x (V-S_1)\|_{L^2}.
\]
Thus it follows from \eqref{ineq-1} that
\begin{align*}
\begin{aligned} 
\frac{d}{dt} \|V-S_1\|_{L^2}^2  \le -\alpha\|\partial_x (V-S_1)\|_{L^2}^2 \le -\frac{\alpha}{C_0^{4}} \|V-S_1\|_{L^2}^6.
\end{aligned}
\end{align*}
This inequality implies the decay estimate 
\begin{align*}
\begin{aligned} 
\|V-S_1\|_{L^{2}}^4 &\le  \frac{C_0^{4}\|U_0 - S_1\|_{L^2}^4}{C_0^{4}+ t \|U_0 - S_1\|_{L^2}^4 }.
\end{aligned}
\end{align*}
Using the inequality $2(\alpha+\beta)^{1/4} \ge \alpha^{1/4} + \beta^{1/4}$, we have
\begin{align*}
\begin{aligned} 
\|V-S_1\|_{L^{2}} &\le \Big( \frac{C_0^{4}\|U_0 - S_1\|_{L^2}^4}{C_0^{4}+ t \|U_0 - S_1\|_{L^2}^4 } \Big)^{1/4}\\
&\le \frac{2C_0\|U_0 - S_1\|_{L^2}}{C_0+t^{1/4}\|U_0 - S_1\|_{L^2} }, \quad t>0,
\end{aligned}
\end{align*}
which completes the decay estimate \eqref{convergence}.

\section{Proof of Theorem \ref{thm-example}} 
In this section, we construct a strictly convex flux $A$ and initial data $U_0$ as a small perturbation of viscous shock $S_1$ in order to make the dissipation $D$ to be negative for very short time, which definitely complete the proof. Without loss of generality, we only consider the simple case when two endpoints are given by $u_{+}=-a$, $u_{-}=a$ for given $a>0$. Since we may construct the convex flux $A$ satisfying $A(-a)=A(a)=0$ below, the shock speed $\sigma=0$. Thus the associated dissipation $D$ in \eqref{dissipate} becomes
\begin{align}
\begin{aligned}\label{dissipate-2}
D(t) &= 2 \dot{X}(t) \int_{-a}^{a} w dy -2  \int_{-a}^{a} A(w+y|y) dy -2  \int_{-a}^{a} A(y)  |\partial_y w|^2 dy,
\end{aligned}
\end{align}
\subsection{small perturbation of $S_1$} 
For a given $\eps>0$, we consider an initial $\eps$-perturbation $w (0,y)$ of $S_1$, that is, we replace $w (0,y)$ by $\eps \phi (y)$, where $\phi$ is a function of order $\mathcal{O}(1)$. Doing the Taylor expansion of $A$, the relative flux $A(\eps\phi+y|y)$ in \eqref{dissipate-2} can be written as
\begin{align*}
\begin{aligned}
A(\eps\phi+y|y)&=A(\eps\phi+y)-A(y)-A^{\prime}(y)\eps \phi \\
&=\frac{1}{2}A^{\prime\prime}(y) \eps^2 \phi^2 +\mathcal{O}(\eps^3). 
\end{aligned}
\end{align*}
Thus, under the $\eps$-perturbation framework, the initial dissipation in \eqref{dissipate-2} becomes
\begin{align}
\begin{aligned}\label{perturb}
D(0) &= 2 \dot{X}(0) \int_{-a}^{a} \eps\phi dy - 2 \int_{-a}^{a} A(\eps \phi+y|y) dy  -2\int_{-a}^{a} A(y)  | \eps \phi^{\prime} |^2 dy\\
&=2  \eps\dot{X}(0) \int_{-a}^{a}\phi dy -\eps^2 \Big(\int_{-a}^{a} A^{\prime\prime}(y) |\phi|^2 dy + 2  \int_{-a}^{a} A(y)  |\phi^{\prime} |^2 dy + \mathcal{O}(\eps) \Big).
\end{aligned}
\end{align}
\subsection{construction of $A$ and $U_0$} 
For given $\alpha\in (0,a)$, we first define two continuous functions $\bar{A}_{\alpha}$ and $\psi_{\alpha}$ by
\begin{align*}
\begin{aligned}
\bar{A}_{\alpha}(x)=\left\{ \begin{array}{ll}
         -a -x & \mbox{if $-a < x < -a+\alpha$},\\
        -\alpha & \mbox{if $-a+\alpha \le x < a-\alpha$},\\
         x -a & \mbox{if $a-\alpha \le x < a$},\end{array} \right.
\end{aligned}
\end{align*}
\begin{align*}
\begin{aligned}
\psi_{\alpha}(x)=\left\{ \begin{array}{ll}
         -\sqrt{x+a} & \mbox{if $-a < x < -a+\alpha$},\\
        \frac{\sqrt{\alpha}}{a-\alpha} x & \mbox{if $-a+\alpha \le x < a-\alpha$},\\
         \sqrt{a-x} & \mbox{if $a-\alpha \le x < a$},\end{array} \right.
\end{aligned}
\end{align*}
First of all, we compute formally
\begin{align*}
\begin{aligned}
\int_{-a}^{a} \bar{A}_{\alpha}^{\prime\prime}(y) |\psi_{\alpha}(y)|^2 dy &= \int_{-a}^{a} (\delta_{-a+\alpha} +\delta_{a-\alpha})  |\psi_{\alpha}|^2 dy\\
&= |\psi_{\alpha}(-a+\alpha)|^2 + |\psi_{\alpha}(a-\alpha)|^2 = 2\alpha, \\
 \int_{-a}^{a} \bar{A}_{\alpha}(y)  |\psi_{\alpha}^{\prime}(y) |^2 dy &=  -2 \Big[ \int_{0}^{a-\alpha} \alpha (\frac{\sqrt{\alpha}}{a-\alpha})^2 d y+  \int_{a-\alpha}^a (y-a)(\frac{-1}{2\sqrt{a-y}})^2 dy \Big] \\
 &= -\frac{2\alpha^2}{a-\alpha}-\frac{\alpha}{2}.
\end{aligned}
\end{align*}
Then, we choose $\alpha_*<\frac{a}{5}$ small enough so that
\beq\label{formal}
\int_{-a}^{a} \bar{A}_{\alpha_*}^{\prime\prime}(y) |\psi_{\alpha_*}|^2 dy + 2  \int_{-a}^{a} \bar{A}_{\alpha_*}(y)  |\psi_{\alpha_*}^{\prime} |^2 dy >0.
\eeq
Since the inequality \eqref{formal} is strict, we can consider the smooth approximations of $\bar{A}_{\alpha_*}$ and $\psi_{\alpha_*}$, for which the inequality \eqref{formal} is still true by rigorous computation. More precisely, by using the Gaussian mollifier, there exists the smooth approximations $A$ and $\phi$ of $\bar{A}_{\alpha_*}$ and $\psi_{\alpha_*}$ respectively, such that
\[
A^{\prime\prime}>0,\quad \int_{-a}^{a} \phi dy= \int_{-a}^{a}\psi_{\alpha_*} dy= 0,
\]
and the inequality \eqref{formal} still holds as
\[
\int_{-a}^{a} A^{\prime\prime}(y) |\phi|^2 dy + 2  \int_{-a}^{a} A(y)  |\phi^{\prime} |^2 dy >0.
\]
We can still choose sufficiently small $\eps_0>0$ such that
\beq\label{ss}
\int_{-a}^{a} A^{\prime\prime}(y) |\phi|^2 dy + 2  \int_{-a}^{a} A(y)  |\phi^{\prime} |^2 dy+ \mathcal{O}(\eps_0) >0.
\eeq
Since $\int_{-a}^{a} \phi dy=0$, it follows from \eqref{perturb} and \eqref{ss} that
\beq\label{diss-2}
D(0) = -\eps_0^2 \Big(\int_{-a}^{a} A^{\prime\prime}(y) |\phi|^2 dy + 2  \int_{-a}^{a} A(y)  |\phi^{\prime} |^2 dy + \mathcal{O}(\eps_0) \Big) <0.
\eeq
If we consider a initial data $U_0$ constructed by $U_0(x)=\eps_0\phi(S_1(x)) + S_1(x)$, then we have
\[
U_0(x)-S_1(x)=\eps_0\phi(S_1(x)) =\eps_0 \phi (y) =w(0,y),
\]
which implies that $D(0)<0$ from \eqref{diss-2} for the flux $A$ and the initial data $U_0$.\\ 
Since $D(t)$ is smooth for $t>0$, for any Lipschitz function $X(t)$, there exists a small time $T^*$ depending on the Lipschitz constant of $X(t)$ such that $D(t)<0$, for $0\le t < T^*$. Hence we conclude the proof.

\section{Proof of Theorem \ref{thm-improve}} 
In this section, we prove the Theorem \ref{thm-improve}. We begin by recalling the inviscid problem
\begin{align}
\begin{aligned} \label{inviscid main-2}
&\partial_t U^{\eps} + \partial_x A(U^{\eps}) = \eps\partial^2_{xx} U^{\eps}, \quad t>0,~x\in \bbr,\\
&U^{\eps}(0,x) = U_0(x).
\end{aligned}
\end{align}
We here present two kinds of improvements. The first improvement \eqref{im-1} is based on the contraction \eqref{contract} and the second improvement \eqref{im-2} is related to the decay estimate \eqref{convergence}.
 
\subsection{Improvement based on the contraction}
For a solution $U^{\eps}$ to \eqref{inviscid main-2}, we consider 
\[
U(t,x):=U^{\eps}(\eps t, \eps x),
\]
then $U$ is a solution to 
\begin{align*}
\begin{aligned}
&\partial_t U + \partial_x A(U) = \partial^2_{xx} U, \quad t>0,~x\in \bbr,\\
&U(0,x) = U_0(\eps x).
\end{aligned}
\end{align*}  
We now use the contraction property \eqref{contract} to get
\begin{align*}
\begin{aligned}
\|U(t,\cdot)-S_1(\cdot-X(t))\|_{L^{2}(\bbr)} \le \|U_0-S_1\|_{L^{2}(\bbr)},\quad t>0,
\end{aligned}
\end{align*}  
where the shift $X(t)$ verifies \eqref{curve}.\\
Then, by rescaling $ t \rightarrow \frac{t}{\eps}$ and $x \rightarrow \frac{x}{\eps}$, i.e.,
\beq\label{relation-1}
\int_{\bbr} |U(t,x)-S_1(x-X(t))|^2 dx = \frac{1}{\eps}\int_{\bbr} \Big|U^{\eps}(t,x) - S_1(\frac{x-Y(t)}{\eps})\Big|^2 dx,
\eeq
where the shift $Y$ is defined by $Y(t)=\eps X(t/\eps)$, we get 
\[
\Big\|U^{\eps}(t,\cdot)-S_1(\frac{\cdot-Y(t)}{\eps})\Big\|_{L^{2}(\bbr)} \le \Big\|U_0-S_1(\frac{\cdot}{\eps})\Big\|_{L^{2}(\bbr)}.
\]
Therefore we have
\begin{align*}
\begin{aligned}
&\|U^{\eps}(t,\cdot)-S_0(\cdot-Y(t))\|_{L^{2}(\bbr)} \\
&\qquad \le \Big\|U^{\eps}(t,\cdot)-S_1(\frac{\cdot-Y(t)}{\eps})\Big\|_{L^{2}(\bbr)} + 
\Big\|S_1(\frac{\cdot-Y(t)}{\eps})-S_0(\cdot-Y(t))\Big\|_{L^{2}(\bbr)}\\
&\qquad \le \Big\|U_0-S_1(\frac{\cdot}{\eps})\Big\|_{L^{2}(\bbr)}  + C\sqrt{\eps}\\
&\qquad \le \Big\|U_0-S_0\Big\|_{L^{2}(\bbr)} + \Big\|S_0-S_1(\frac{\cdot}{\eps})\Big\|_{L^{2}(\bbr)}  + C\sqrt{\eps}\\
&\qquad \le \Big\|U_0-S_0\Big\|_{L^{2}(\bbr)} + C\sqrt{\eps}\\
\end{aligned}
\end{align*} 
where we have used the fact that for any function $\beta$ of $t$,
\[
\Big\|S_1(\frac{\cdot-\beta(t)}{\eps})-S_0(\cdot-\beta(t))\Big\|_{L^{2}(\bbr)} = \sqrt{\eps} \|S_1-S_0 \|_{L^{2}(\bbr)}.
\]

\subsection{Improvement based on the decay estimate} 
For the other improvement, we use the decay estimate \eqref{convergence} to get
\begin{align*}
\begin{aligned}
&\|U(t,\cdot)-S_1(\cdot-X(t))\|_{L^{2}}\\
&\qquad \le \frac{C\|U(0,\cdot) - S_1\|_{L^2}(1+  \|U(0,\cdot)-S_1\|_{L^{1}}+ \|U(0,\cdot)-S_1\|_{L^{2}}^2)}{1+ t^{1/4} \|U(0,\cdot) - S_1\|_{L^2}}.
\end{aligned}
\end{align*} 
Then, by the rescaling \eqref{relation-1}, we get
\begin{align*}
\begin{aligned}
&\|U^{\eps}(t,\cdot) - S_1(\frac{\cdot-Y(t)}{\eps})\|_{L^{2}} \\
&\qquad\le \frac{C\eps^{3/4}\|U_0 - S_1(\frac{\cdot}{\eps})\|_{L^2}(1+  \eps^{-1}(\|U_0-S_1(\frac{\cdot}{\eps})\|_{L^{1}}+ \|U_0-S_1(\frac{\cdot}{\eps})\|_{L^{2}}^2))}{\eps^{3/4}+ t^{1/4} \|U_0 - S_1(\frac{\cdot}{\eps})\|_{L^2}}.
\end{aligned}
\end{align*} 
If we consider the small initial perturbation as 
\beq\label{small-per}
\|U_0-S_0\|_{L^{1}}+ \|U_0-S_0\|_{L^{2}}^2\le C_1\eps,
\eeq
 we have
\begin{align*}
\begin{aligned}
&\|U_0-S_1(\frac{\cdot}{\eps})\|_{L^{1}} \le \|U_0-S_0\|_{L^{1}} + \|S_0-S_1(\frac{\cdot}{\eps})\|_{L^{1}} \le C\eps, \\
&\|U_0-S_1(\frac{\cdot}{\eps})\|_{L^{2}}^2 \le \|U_0-S_0\|_{L^{2}}^2 + \|S_0-S_1(\frac{\cdot}{\eps})\|_{L^{2}}^2 \le C\eps,
\end{aligned}
\end{align*} 
which yields
\begin{align*}
\begin{aligned}
\|U^{\eps}(t,\cdot) - S_1(\frac{\cdot-Y(t)}{\eps})\|_{L^{2}}^2 &\le \frac{C\eps^{3/2}\|U_0 - S_1(\frac{\cdot}{\eps})\|_{L^2}^2}{\eps^{3/2}+ t^{1/2} \|U_0 - S_1(\frac{\cdot}{\eps})\|_{L^2}^2}\\
&\le \frac{C\eps^{5/2}}{\eps^{3/2}+ t^{1/2} (\|S_1(\frac{\cdot}{\eps})-S_0\|_{L^{2}}^2-\|U_0 - S_0\|_{L^2}^2)}.
\end{aligned}
\end{align*} 
If we consider some constant $C_1$ in \eqref{small-per} such that 
\[
C_1< \|S_1-S_0 \|_{L^{2}}^2=\frac{1}{\eps}\|S_1(\frac{\cdot}{\eps})-S_0\|_{L^{2}}^2,
\]
then we have
\begin{align*}
\begin{aligned}
\|U^{\eps}(t,\cdot) - S_1(\frac{\cdot-Y(t)}{\eps})\|_{L^{2}}^2 
&\le \frac{C\eps^{5/2}}{\eps^{3/2}+ t^{1/2} ( \|S_1-S_0 \|_{L^{2}}^2- C_1)\eps}\\
&\le \frac{C\eps^{3/2}}{\eps^{1/2}+ t^{1/2} }.
\end{aligned}
\end{align*}

\end{document}